\theoremstyle{plain}
\newtheorem{theorem}{Theorem}[section]
\newtheorem{corollary}[theorem]{Corollary}
\newtheorem{definition}[theorem]{Definition}
\newtheorem{proposition}[theorem]{Proposition}
\theoremstyle{remark}
\newtheorem{remark}[theorem]{Remark}
\numberwithin{equation}{section}
\DeclareMathOperator{\diag}{diag}
\DeclareMathOperator{\id}{id}
\DeclareMathOperator{\pr}{pr}
\def\shf{\mathcal}
\newcommand{\uparrowx}{\operatorname{\uparrow}x}
\newcommand{\wtCv}{\widetilde{C_v}}
\newcommand{\wtcv}{\widetilde{c_v}}
\newcommand{\wtFv}{\widetilde{F_v}}
\newcommand{\wtfv}{\widetilde{f_v}}
\newcommand{\wtRv}{\widetilde{R_v}}
\newcommand{\wtSv}{\widetilde{S_v}}
\newcommand{\wtsv}{\widetilde{s_v}}
\newcommand{\wtsw}{\widetilde{s_w}}
\newcommand{\wtshfL}{\widetilde{\shf{L}}}
\newcommand{\wtshfN}{\widetilde{\shf{N}}}
\newcommand{\wtshfNn}{\widetilde{\shf{N}_n}}
\newcommand{\wtshfS}{\widetilde{\shf{S}}}
\newcommand{\bv}[1]{\mathbf{#1}}
\newcommand{\veceta}{\boldsymbol{\eta}}
\newcommand{\vech}{\bv{h}}
\newcommand{\vechs}{\bv{h_s}}
\newcommand{\vecone}{\mathbbm{1}}
\newcommand{\vecu}{\bv{u}}
\newcommand{\vecx}{\bv{x}}
\newcommand{\vecy}{\bv{y}}
\newcommand{\wtvecu}{\widetilde{\vecu}}
\newcommand{\wtvecx}{\widetilde{\vecx}}
\begin{document}


\title{Sheaf-theoretic framework for optimal network control}

\author{Griffin M. Kearney$^*$}\thanks{This material is based upon work supported in part by the Air Force Research Laboratory (AFRL)
under award number FA8750-18-C-0016.
The views and conclusions contained herein are those of the authors
and should not be interpreted as necessarily representing the official policies or endorsements,
either expressed or implied,
of the United States Air Force.
Cleared \mbox{AFRL-2020-0378} 27 Nov 2020. \\ \indent
$^*$SRC, Inc., North Syracuse, NY 13212 (\{gkearney,kpalmowski,mrobinson\}@srcinc.com)}
\author{Kevin F. Palmowski$^*$}
\author{Michael Robinson$^{*\dagger}$}\thanks{$^\dagger$Department of Mathematics and Statistics, American University, Washington, DC 20016 (michaelr@american.edu)}

\renewcommand{\abstractname}{\textbf{Abstract}}

\begin{abstract}
In this paper,
we use tools from sheaf theory to model and analyze optimal network control problems and their associated discrete relaxations.
We consider a general problem setting in which pieces of equipment and their causal relations are represented as a directed network,
and the state of this equipment evolves over time according to known dynamics and the presence or absence of control actions.
First, we provide a brief introduction to key concepts in the theory of sheaves on partial orders.
This foundation is used to construct a series of sheaves that build upon each other to model the problem of optimal control,
culminating in a result that proves that solving our optimal control problem is equivalent to
finding an assignment to a sheaf that has minimum consistency radius and restricts to a global section on a particular subsheaf.
The framework thus built is applied to the specific case where a model is discretized to one in which
the state and control variables are Boolean in nature,
and we provide a general bound for the error incurred by such a discretization process.
We conclude by presenting an application of these theoretical tools that demonstrates
that this bound is improved when the system dynamics are affine.

\vspace{0.1in}
\noindent \textbf{Key words}. sheaf, network dynamical system, graph, optimization, optimal control, network control, discretization, consistency radius

\vspace{0.1in}
\noindent \textbf{AMS subject classifications}. Primary, 93C30, 54B40; Secondary, 93A30
\end{abstract}


\maketitle



\section{Introduction}
\label{sec-intro}

Optimization is a useful framework for improving the design and control of network systems.
Applications of optimization techniques to network problems arise in resource allocation,
increased system robustness, and increased efficiency within transport networks
\cite{roscomfag19,keafar18,com17,farkea17,savcomdah14,comsavacedahfra13a,comsavacedahfra13b,comsavacedahfra12,zil00},
as well as in emerging areas like viral marketing
\cite{claleealobuspoo18,aceozdtah16,batdelgalgresti12,easkle10,boyvan04}.
Because of the high computational complexity of solving optimization problems in these practical settings,
there are numerous techniques for simplifying these problems through approximations and relaxations.
One such approximation strategy relies on coarsening the (nominally continuous) states of the network into discrete values.
When the network infrastructure admits a Boolean state description,
state discretizations can provide substantial computational savings.
The practical value of a given state discretization is greatly dependent on the amount of error induced by the coarsening process:
a discretization that induces too much error is generally useless,
so it is of the utmost importance to characterize and quantify the nature of this discretization error.
In this paper,
we develop a theoretical framework for evaluating these errors in a general setting.


The main result of this paper is Theorem \ref{sec-btm-relaxation_subsec-lifting-thresh-thm},
which provides an explicit bound on the overall amount of error incurred by
approximating an optimization problem on a network
with a discretized network optimization problem.
The precise specification of the bound is rather intricate,
as it is quite general in the sense that it applies to \emph{any} network optimization problem.
With this bound in hand,
we apply it to the case of discretization to network states represented as vectors of Boolean values.
Since efficient optimization solvers for integer programs exist
\cite{boyvan04},
solving the discretized problem is much easier than solving the original continuous problem.
The bound from the theorem estimates how much error this process will incur.


Since Theorem \ref{sec-btm-relaxation_subsec-lifting-thresh-thm} is extremely general,
it relies upon a mathematical framework that is both sufficiently expressive to capture all optimization problems on networks,
whether continuous or discrete,
and sufficiently refined so as to provide analytic bounds.
These two competing requirements are met by the mathematical theory of \emph{sheaves}.

Sheaves are the foundational mathematical theory for modeling local consistency relationships among data.
We use them to express local consistency between state variables on neighboring portions of the network
and between neighboring time steps.
As a necessary first step,
the dynamical model of the network, its discretization,
and the cost function to be optimized must all be encoded within the context of sheaves.
Theorem \ref{thm:dynamic_optimal} establishes that all network optimization problems can be encoded sheaf-theoretically in a lossless,
structure-preserving way.


In practice,
optimal control formulations are often numerically difficult to solve.
The development of approximations and relaxations for intractable control problems provides practical improvements at the expense of accuracy.
Therefore,
it is of great importance to quantify how much an approximate solution deviates from the exact solution.
While related work has been carried out in order to develop error bounds for specific problems
\cite{comsavacedahfra13a,comsavacedahfra13b,easkle10,boyvan04},
the sheaf-based methods and results in this paper are generally applicable to wide classes of optimal network control problems.


Sheaf theory is based upon the theory of \emph{topological spaces},
and the application of sheaves to various problems fits into the growing field of \emph{topological data analysis}.
The topology of the network is an integral part of our sheaf encoding,
but plays an implicit role in our results,
so the topology of the network can impact the overall error bound.
It is well established that topological properties of dynamical systems play a large role in their behavior
\cite{guckenheimer2013nonlinear,bush2012combinatorial,mischaikow2002topological}.
Moreover, there has been recent interest in the topological properties of time series generated by a dynamical system
\cite{perea2019topological,xu2019twisty,tralie2018quasi,perea2015sliding,mischaikow1999construction}.
These works focus primarily on discrete observations of a continuous dynamical system,
while this paper is instead focused on the incurred discretization error.


The generality of sheaves makes them an increasingly popular tool for analyzing complex systems.
The encodings herein are based upon the general recipe expressed in
\cite{robinson2017sheaf}.
In that work, discretization error was found to be expressed as the failure of certain sheaf diagrams to be commutative.
This failure can be expressed as a consistency radius, which was defined in
\cite{robinson2017sheaves}
and subsequently expanded in
\cite{robinson2019hunting,robinson2018assignments}.
That said,
the expression of a network dynamical system,
constructed in this article using a \emph{sheaf morphism} to encode the cost function for optimization,
also appears to be novel.


\subsection{Organization of the paper}

We begin with a brief introduction to topology and sheaf theory in Section \ref{sec-basic_sheaves},
which provides a self-contained treatment of the concepts needed for the rest of the article.
The reader who is already familiar with sheaves may find it expedient to skim Section \ref{sec-basic_sheaves} for notation,
referring back to it as necessary.
Preliminary definitions and a statement of the problem of interest are the subject of Section \ref{sec-problem-statement}.
In Section \ref{sec-sheaf-encoding},
we construct a series of sheaves that, taken together, encode the problem of interest.
This sheaf-theoretic framework is used to encode a Boolean relaxation of our original problem in Section \ref{sec-btm-relaxation},
where Theorem \ref{sec-btm-relaxation_subsec-lifting-thresh-thm}
provides bounds on the approximation error incurred by this discretization.
As noted, the theorem applies quite generally,
and Section \ref{sec-appl-boolean-control} provides a detailed example
that demonstrates how the error bounds can be improved
with additional problem-specific knowledge and assumptions,
adding practical relevance to the theoretical result.



\section{A brief introduction to topology and sheaves}
\label{sec-basic_sheaves}

We refer the reader to several recent introductions into the use of sheaves in data analysis \cite{robinson2019hunting,robinson2017sheaves,robinson2014topological},
but recount the necessary basics here.
A \emph{sheaf} is a mathematical object that encodes consistency relations among data associated to the elements of a partially ordered set.
Intuitively, a sheaf labels elements of a partially ordered set with various sets of possible values,
and these values are related to one another by functions that are
defined whenever the underlying elements of the partially ordered set are related.
Specifically, let $P$ be a set and $\le_P$ be a \emph{partial order} on the elements of $P$, which satisfies
\begin{itemize}
\item \textbf{Reflexivity:} $x \le_P x$ for all $x\in P$,
\item \textbf{Antisymmetry:} At most one of $x \le_P y$ or $y \le_P x$ is true if $x \not= y$, and
\item \textbf{Transitivity:} $x \le_P z$ whenever $x \le_P y$ and $y \le_P z$.
\end{itemize}
Elements $x, y \in P$ are \emph{comparable} if $x \le_P y$ or $y \le_P x$, and \emph{incomparable} otherwise.
A partial order is said to be \emph{locally finite} if for each $x \in P$ there are only finitely many $y \in P$ with $x \le_P y$.
Typically, we represent a locally finite partial order $(P,\le_P)$ by its \emph{Hasse diagram},
a directed graph with a vertex for each element of $P$.
Each edge of the Hasse diagram $x \to y$ implies that $x \le_P y$,
and the edge set of a Hasse diagram is minimal in the sense that all relations
$x \le_P y$ that do not have a corresponding $x \to y$ can be recovered by transitivity
of the relations that \emph{are} represented in the diagram.
We typically write $\le_P$ as $\le$ to reduce visual clutter if there is no opportunity for confusion.

It is a useful fact that every partially ordered set $(P,\le)$ can be endowed with the \emph{Alexandrov} topology,
a canonical topology that is generated by sets of the form
\[
\uparrowx := \{ y \in P : x \le y \}
\]
for $x\in P$.
Arbitrary unions of these $\uparrowx$ form the \emph{open sets} in the Alexandrov topology.


\begin{definition}
A \emph{sheaf $\shf{S}$ on the partially ordered set $(P,\le)$} consists of the following specification:
\begin{itemize}
\item \textbf{\emph{Stalks:}} Sets $\shf{S}(x)$ for each $x$ in $P$, and
\item \textbf{\emph{Restrictions:}} Functions $\shf{S}(x \le y) \colon \shf{S}(x) \to \shf{S}(y)$ for each related pair of elements $x \le y$ in $P$,
such that $\shf{S}(x \le z) = \shf{S}(y \le z) \circ \shf{S}(x \le y)$ whenever $x \le y \le z$ in $P$.
\end{itemize}
\end{definition}
The notation for $\shf{S}$ is polymorphic: $\shf{S}(x)$ is a set, while $\shf{S}(x \le y)$ is a function between sets.


We will usually represent a sheaf as an annotated Hasse diagram,
in which the vertices are labeled with their stalks and the edges are labeled with their restrictions.
As a simple example,
consider the partially ordered set $\{a,b,c,d\}$ in which the partial order is generated by
$a \le b$, $a \le c$, $b \le d$, and $c \le d$, but $b$ and $c$ are incomparable.
The Hasse diagram for this scenario is shown at left below.
\begin{equation} \label{eqn:ex-shf-diagram}
\xymatrix @R = 0.5pc {
&d&&&\shf{S}(d)&\\
b \ar[ur] & & c \ar[ul]&\shf{S}(b) \ar[ur]^-{\shf{S}(b\le d)} & & \shf{S}(c) \ar[ul]_-{\shf{S}(c \le d)}\\
& a \ar[ur]\ar[ul] &&& \shf{S}(a) \ar[ur]_-{\shf{S}(a\le c)}\ar[ul]^-{\shf{S}(a \le b)} &
}
\end{equation}
Any sheaf on this partially ordered set can be described using the same diagram.
For instance,
if the sheaf is named $\shf{S}$,
then its diagram is shown at right in \eqref{eqn:ex-shf-diagram}.
This diagram shows all of the stalks and almost all of the restrictions.
The remaining restriction, $\shf{S}(a \le d)$, can be reconstructed via transitivity, since
\[\shf{S}(a \le d)  = \shf{S}(b \le d) \circ \shf{S}(a \le b)  = \shf{S}(c \le d) \circ \shf{S}(a \le c). \]

The diagram \eqref{eqn:ex-shf-diagram} is an example of a \emph{commutative} diagram,
which is a directed graph in which the edges are labeled with functions
such that composition of functions does not depend on the path taken within the graph.
The transitivity axiom guarantees that the diagrams for sheaves on partially ordered sets are always commutative.


In order to encode an optimization problem as a sheaf,
it is useful to recognize that some (rather trivial) sheaves are easy to define.
For instance, the \emph{trivial sheaf} on an arbitrary partial order $(P,\le)$
is written $\widehat{0}$.
Each stalk of $\widehat{0}$ is the trivial vector space $0$,
containing only the zero element.
This specification completely determines all of the restrictions to be zero maps.


Continuing with the interpretation of a sheaf $\shf{S}$ on $(P,\le)$ as specifying a way to label elements of $P$,
a \emph{global assignment} of $\shf{S}$ consists of an element of the product $\prod_{x \in P} \shf{S}(x)$,
while a \emph{local assignment} is an element of a similar product over any subset of $P$.
We use $a_x$ to denote the value that an assignment $a$ assigns to $x \in P$.
Any assignment $s$ that agrees with the restrictions,
which is to say that it satisfies
$s_y = \left( \shf{S}(x \le y) \right)\left(s_x\right)$
for all $x \le y$ where $s$ is defined, is called a \emph{section}%
\footnote{The reader who happens to be familiar with the traditional literature on sheaves may notice that our definition of the stalk of a sheaf is much simpler than usual, since stalks typically involve a limit construction. Such a limit simply reduces to the space of sections over a single open set of the form $\uparrowx$, as our definition demands.}
of $\shf{S}$.
Like assignments, sections can be local or global.

In what follows, the stalks will be taken to be sets of state variables.
In order to be useful quantitatively, these sets should be endowed with a \emph{pseudometric},
which allows distances between states to be computed.
Specifically, a \emph{pseudometric} $d$ on a set $X$ is a function
$d \colon X \times X \to \mathbb{R}$ that satisfies the following properties for all $x, y, z \in X$:
\begin{itemize}
\item \textbf{Symmetry:} $d(x,y) = d(y,x)$,
\item \textbf{Reflexivity:} $d(x,x) = 0$,
\item \textbf{Nonnegativity:} $d(x,y) \ge 0$, and
\item \textbf{Triangle inequality:} $d(x,z) \le d(x,y) + d(y,z)$.
\end{itemize}

When the domain and codomain of a function $f \colon X \to Y$ both have pseudometrics,
$d_X$ and $d_Y$,
respectively,
$f$ is said to be \emph{Lipschitz} if there is a $K>0$ such that
$d_Y\left(f(x_1),f(x_2)\right) \le K d_X(x_1,x_2)$
for all $x_1, x_2 \in X$.
We call the infimum of all such $K$ the \emph{Lipschitz constant} for $f$.

We will assume that all stalks of all sheaves in this article are endowed with (possibly different) pseudometrics and
that each restriction is a Lipschitz function.
In this case, we say that $\shf{S}$ is a sheaf \emph{of pseudometric spaces}.
Under this assumption,
we can estimate how far a given assignment is from being a section using its \emph{consistency radius}.


\begin{definition}
If $a$ is an assignment to a sheaf $\shf{S}$ of pseudometric spaces on a partially ordered set $(P,\le)$,
then the quantity
\[ c_{\shf{S}}(a) := \sqrt{\sum_{\{(x,y) \in P \times P \, : \, x \le y\}} d_y ( a_y, ( \shf{S}(x \le y) ) ( a_x ) )^2}, \]
where $d_y$ is the pseudometric on the stalk $\shf{S}(y)$,
is called the \emph{consistency radius of $a$}.
\end{definition}


There are several useful results pertaining to the consistency radius.
The most basic is that it is a lower bound on the distance between an assignment and the nearest global section.
To understand that bound,
it is useful to use the \emph{assignment pseudometric}, given by
\[d_{\shf{S}} (a,b) := \sqrt{\sum_{x \in P} d_x(a_x,b_x)^2}\]
for two assignments $a$ and $b$ to a sheaf $\shf{S}$.
Intuitively, the assignment pseudometric measures the distance between two assignments.


\begin{proposition}\cite[Prop. 23]{robinson2017sheaves}
\label{sec-sheaf-encoding_prop-crbound}
Suppose that $a$ is an assignment to a sheaf $\shf{S}$ of pseudometric spaces on a locally finite partially ordered set.
If every restriction map of $\shf{S}$ has Lipschitz constant less than or equal to $K$, then
\[c_{\shf{S}}(a) \le (1+K) d_{\shf{S}}(s,a)\]
for every global section section $s$ of $\shf{S}$.
\end{proposition}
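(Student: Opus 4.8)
The plan is to fix an arbitrary global section $s$ of $\shf{S}$ and to bound the consistency radius $c_{\shf{S}}(a)$ term by term against the local discrepancies $e_z := d_z(a_z, s_z)$, whose $\ell^2$ aggregate is exactly $d_{\shf{S}}(s,a)$. Since $c_{\shf{S}}(a)$ is itself an $\ell^2$ aggregate of the local inconsistencies $d_y(a_y, (\shf{S}(x \le y))(a_x))$ over comparable pairs $x \le y$, the strategy is to estimate each such inconsistency by a combination of $e_x$ and $e_y$, and then to reassemble the pieces.

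First I would establish the per-term estimate. Fix a comparable pair $x \le y$ and insert the section value $s_y$ using the triangle inequality for the pseudometric $d_y$:
\[ d_y(a_y, (\shf{S}(x \le y))(a_x)) \le d_y(a_y, s_y) + d_y(s_y, (\shf{S}(x \le y))(a_x)). \]
Because $s$ is a section, it agrees with the restrictions, so $s_y = (\shf{S}(x \le y))(s_x)$. Substituting this into the second term and invoking the Lipschitz hypothesis (with constant at most $K$) on the restriction $\shf{S}(x \le y)$ gives
\[ d_y(s_y, (\shf{S}(x \le y))(a_x)) = d_y\big((\shf{S}(x \le y))(s_x), (\shf{S}(x \le y))(a_x)\big) \le K\, d_x(s_x, a_x) = K e_x. \]
Combining the two displays, every local inconsistency is bounded by $e_y + K e_x$.

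The remaining step is aggregation. Squaring the per-term bound and summing over all comparable pairs, and using that the diagonal pairs $x = y$ contribute nothing since $\shf{S}(x \le x) = \id$, I would apply Minkowski's inequality in $\ell^2$ over the index set of comparable pairs to split $\sqrt{\sum_{x \le y}(e_y + K e_x)^2}$ into the sum of the $\ell^2$ norm of the family $(e_y)$ and $K$ times the $\ell^2$ norm of the family $(e_x)$, each taken over comparable pairs; local finiteness guarantees these sums are well defined. Recognizing each of these two norms as $d_{\shf{S}}(s,a)$ and factoring out $1 + K$ would yield the claimed inequality.

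The main obstacle is precisely this final reconciliation: the consistency radius sums over all comparable pairs $(x,y)$, whereas the assignment pseudometric $d_{\shf{S}}(s,a)$ sums over individual elements $z \in P$, so each vertex is counted with a multiplicity equal to the number of comparable pairs in which it participates. I expect the careful control of these multiplicities --- ensuring they do not inflate the constant beyond $1 + K$ --- to be the crux of the argument, and the point at which the precise formulation of the consistency radius (and the role of local finiteness) must be used most delicately; by contrast, the triangle-inequality-plus-Lipschitz per-term estimate is routine.
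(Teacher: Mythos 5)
Your per-term estimate --- insert $s_y$ via the triangle inequality, then use the section property $s_y = (\shf{S}(x \le y))(s_x)$ and the Lipschitz hypothesis to get $d_y(a_y, (\shf{S}(x \le y))(a_x)) \le e_y + K e_x$ --- is correct, and it is indeed the heart of the matter; note that the paper itself gives no proof of this proposition but imports it from the cited source. The genuine gap is exactly where you suspected it, and it cannot be repaired: after Minkowski you are left with $\bigl(\sum_{x \le y} e_y^2\bigr)^{1/2} + K \bigl(\sum_{x \le y} e_x^2\bigr)^{1/2}$, and these sums over comparable \emph{pairs} equal multiplicity-weighted sums $\bigl(\sum_z m_z e_z^2\bigr)^{1/2}$ with $m_z \ge 1$, not $d_{\shf{S}}(s,a)$. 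No ``careful control'' caps the multiplicities at $1$, because with the $\ell^2$ definitions of $c_{\shf{S}}$ and $d_{\shf{S}}$ used in this paper the inequality with constant $1+K$ is actually false: take $P = \{x, y_1, \dotsc, y_n\}$ with $x \le y_i$, all stalks $\mathbb{R}$, all restrictions the identity (so $K = 1$), the global section $s \equiv 0$, and the assignment $a_x = 1$, $a_{y_i} = 0$. Then $d_{\shf{S}}(s,a) = 1$ while $c_{\shf{S}}(a) = \sqrt{n}$, which exceeds $(1+K)\, d_{\shf{S}}(s,a) = 2$ once $n \ge 5$.

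What saves the cited result is that in its source both the consistency radius and the distance between assignments are aggregated as suprema rather than as $\ell^2$ sums; there your per-term bound finishes the proof in one line, since $e_x, e_y \le \sup_z e_z$. With the sum-based definitions adopted in this paper, your argument as written does prove the correct analogue, namely $c_{\shf{S}}(a) \le (1+K)\sqrt{M}\, d_{\shf{S}}(s,a)$, where $M$ bounds the number of comparable pairs in which any single element of $P$ participates (in either slot) --- a combinatorial factor of precisely the same nature as the constant $C$ appearing in Proposition \ref{sec-sheaf-encoding_prop-morphism}. So your strategy is sound and your instinct about the crux was right; the flaw is the final step ``recognizing each of these two norms as $d_{\shf{S}}(s,a)$,'' which conflates a multiplicity-weighted sum with an unweighted one, and the statement holds with the constant $1+K$ only under the sup-type definitions of the original reference.
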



If $(P, \le_P)$ and $(Q, \le_Q)$ are partially ordered sets,
then we say that the function $f \colon P \to Q$ is \emph{order preserving}
if $f(x) \le_Q f(y)$ whenever $x \le_P y$.
Order preserving functions can be used to transform sheaves and assignments in a natural way.


\begin{definition}
Suppose that $\shf{S}$ is a sheaf on $(P,\le_P)$,
$\shf{R}$ is a sheaf on $(Q,\le_Q)$,
and $f \colon P \to Q$ is an order preserving function.
A \emph{sheaf morphism $m \colon \shf{R} \to \shf{S}$ along $f$} consists of the specification of
\emph{component functions} $m_x \colon \shf{R}(f(x)) \to \shf{S}(x)$ for each $x \in P$ such that
\[ \shf{S}(x \le_P y) \circ m_x = m_y \circ \shf{R}(f(x)\le_Q f(y))  \]
for all $y \in P$ with $x \le_P y$.
\end{definition}


A sheaf morphism $m \colon \shf{R} \to \shf{S}$ transforms the assignments of $\shf{R}$ into assignments of $\shf{S}$,
simply by applying the component functions stalk-wise.
Explicitly, if $r$ is an assignment to $\shf{R}$,
then $m(r)$ is an assignment to $\shf{S}$ whose value on $x$ is given by
\[(m(r))_x := m_x(r(f(x))).\]

The most useful result about sheaf morphisms is that they transform the consistency radius of assignments in a controlled way,
as demonstrated by the following bound,
which is a generalization of \cite[Lemma 4]{robinson2018assignments}.


\begin{proposition}
\label{sec-sheaf-encoding_prop-morphism}
Let $\shf{R}$ and $\shf{S}$ be sheaves of pseudometric spaces on the partially ordered sets
$(Q, \le_Q)$ and $(P, \le_P)$, respectively.
Suppose that $f \colon P \to Q$ is an order preserving function
and that each $x \in P$ has a corresponding Lipschitz function $m_x \colon \shf{R}\left( f(x) \right) \to \shf{S}(x)$.
Given any assignment $a$ of $\shf{R}$,
construct an assignment $b$ of $\shf{S}$ by setting $b_x := m_x( a_{f(x)})$ for each $x \in P$.
If there exists some $\epsilon \geq 0$ such that
\[ d_{\shf{S}(y)} \left( \left( \shf{S}(x \le_P y) \circ m_x \right)(z), \, \left( m_y \circ \shf{R}(f(x) \le_Q f(y)) \right)(z) \right) \leq \epsilon \]
for all $x, y \in P$ with $x \leq_P y$ and for all $z \in \shf{R}(f(x))$, then
\[ c_{\shf{S}}(b) \le K c_\shf{R}(a) + C \epsilon, \]
where $K > 0$ is any upper bound on the Lipschitz constants of all of the $m_x$ maps,
and $C^2$ is the total number of restrictions present in $\shf{S}$.
\end{proposition}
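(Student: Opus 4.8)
The plan is to estimate $c_{\shf{S}}(b)$ one restriction at a time and then reassemble the per-edge bounds using the triangle inequality for the Euclidean norm. Recalling that
\[ c_{\shf{S}}(b)^2 = \sum_{\{(x,y) \in P \times P \,:\, x \le_P y\}} d_{\shf{S}(y)}\!\left( b_y, \, (\shf{S}(x \le_P y))(b_x) \right)^2, \]
I would first substitute $b_x = m_x(a_{f(x)})$ and $b_y = m_y(a_{f(y)})$, so that the summand for a fixed related pair $x \le_P y$ becomes a comparison in $\shf{S}(y)$ between $m_y(a_{f(y)})$ and $(\shf{S}(x \le_P y) \circ m_x)(a_{f(x)})$.

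The heart of the argument is a single triangle-inequality step applied on each edge. I would insert the intermediate point $(m_y \circ \shf{R}(f(x) \le_Q f(y)))(a_{f(x)})$, which also lives in $\shf{S}(y)$, and bound the summand by $A + B$, where $A = d_{\shf{S}(y)}\!\left( m_y(a_{f(y)}),\, (m_y \circ \shf{R}(f(x) \le_Q f(y)))(a_{f(x)}) \right)$ and $B = d_{\shf{S}(y)}\!\left( (m_y \circ \shf{R}(f(x) \le_Q f(y)))(a_{f(x)}),\, (\shf{S}(x \le_P y)\circ m_x)(a_{f(x)}) \right)$. The near-commutativity hypothesis, applied with $z = a_{f(x)}$, gives $B \le \epsilon$ immediately. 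For $A$ I would invoke the Lipschitz property of $m_y$ (constant at most $K$), which yields $A \le K\, \delta_{x,y}$ where $\delta_{x,y} := d_{\shf{R}(f(y))}\!\left( a_{f(y)},\, \shf{R}(f(x) \le_Q f(y))(a_{f(x)}) \right)$. The crucial observation is that $\delta_{x,y}$ is exactly the discrepancy of the assignment $a$ across the related pair $f(x) \le_Q f(y)$ in $Q$ — a pair that is genuinely related because $f$ is order preserving — so $\delta_{x,y}$ is one of the summands whose squares constitute $c_{\shf{R}}(a)^2$.

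To finish, I would insert the per-edge bound $d_{\shf{S}(y)}(b_y, (\shf{S}(x \le_P y))(b_x)) \le K\delta_{x,y} + \epsilon$ into the sum defining $c_{\shf{S}}(b)^2$ and apply Minkowski's inequality to the two vectors indexed by the related pairs of $P$: the vector with entries $K\delta_{x,y}$ and the constant vector with every entry $\epsilon$. This splits the estimate as $c_{\shf{S}}(b) \le K\sqrt{\sum_{x \le_P y} \delta_{x,y}^2} + \epsilon\sqrt{C^2}$, and the second term is exactly $C\epsilon$ because there are $C^2$ related pairs (restrictions) in $\shf{S}$.

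The step I expect to be the main obstacle is controlling the first term, namely showing $\sum_{x \le_P y} \delta_{x,y}^2 \le c_{\shf{R}}(a)^2$. Each related pair $x \le_P y$ of $P$ contributes the $Q$-discrepancy $\delta_{x,y}$ attached to the related pair $f(x) \le_Q f(y)$, so this is a sum of bona fide summands of $c_{\shf{R}}(a)^2$; the delicate point is that the correspondence $(x,y) \mapsto (f(x), f(y))$ on related pairs must not overcount, so that no $Q$-term is tallied more than once. This holds cleanly when the induced map on related pairs is injective (for instance when $f$ itself is injective, as in the identity-morphism special case generalized from \cite[Lemma 4]{robinson2018assignments}), and it is precisely this bookkeeping — matching the restrictions of $\shf{S}$ to distinct restrictions of $\shf{R}$ through $f$ — that I would scrutinize most carefully; the remaining steps are routine applications of the triangle inequality and the Lipschitz and near-commutativity hypotheses.
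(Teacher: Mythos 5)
Your proof is, step for step, the paper's own argument: the same insertion of the intermediate point $\left( m_y \circ \shf{R}(f(x) \le_Q f(y)) \right)(a_{f(x)})$ via the triangle inequality, the same application of the near-commutativity hypothesis with $z = a_{f(x)}$ to get the $\epsilon$ term, the same Lipschitz bound on $m_y$, and the same Minkowski split that produces $C\epsilon$. The only divergence is that you pause over the re-indexing step $\sum_{x \le_P y} \delta_{x,y}^2 \le c_{\shf{R}}(a)^2$, whereas the paper performs exactly this re-indexing (passing from the sum over related pairs of $P$ to the sum over related pairs of $Q$) without comment. Your caution there is warranted, and in fact the point is sharper than you suggest: the inequality genuinely requires that the correspondence $(x,y) \mapsto (f(x), f(y))$ not hit any $Q$-pair more than once, and the proposition as stated imposes no such hypothesis. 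Concretely, let $P$ consist of two disjoint chains $x_1 \le_P y_1$ and $x_2 \le_P y_2$, let $Q$ be the single chain $u \le_Q v$ with $f(x_i) = u$ and $f(y_i) = v$, take every stalk to be $\mathbb{R}$ and every restriction and every $m_x$ to be the identity (so $\epsilon = 0$ and $K = 1$), and set $a_u = 0$, $a_v = t$; then $c_{\shf{S}}(b) = \sqrt{2}\,|t|$ while the claimed bound gives $K c_{\shf{R}}(a) + C\epsilon = |t|$. So the ``bookkeeping'' you flagged is a genuine hypothesis gap shared by the statement and by the paper's proof, not a defect peculiar to your argument; it vanishes when the induced map on related pairs is injective (as in the identity-base-map case of the cited Lemma 4 and in the thresholding morphisms such as $\Gamma$ that the paper actually uses), and in general it can be repaired by replacing $K$ with $K\sqrt{\mu}$, where $\mu$ is the maximum number of related pairs of $P$ mapping to a single related pair of $Q$. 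Modulo that caveat, which you correctly isolated as the one delicate step, your proof is complete and coincides with the paper's.
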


As a consequence, sheaf morphisms preserve global sections, since for sheaf morphisms, $\epsilon=0$.

\begin{proof}
Let $C^2 := | \{(x,y) \in P \times P : x \le_P y\} |$, the number of restrictions in $\shf{S}$.
By definition,
\begin{align*}
c_{\shf{S}}(b)
	&= \sqrt{\sum_{\{(x,y) \in P \times P \, : \, x \le_P y\}} \left[d_{\shf{S}(y)}\left(b_y,\shf{S}(x \le_P y)\left(b_x\right)\right)\right]^2}\\
	&= \sqrt{\sum_{\{(x,y) \in P \times P \, : \, x \le_P y\}} \left[d_{\shf{S}(y)}\left(m_y\left(a_{f(y)}\right), \shf{S}(x \le_P y)\left(m_x\left(a_{f(x)}\right)\right)\right)\right]^2}\\
	&\le \sqrt{\sum_{\{(x,y) \in P \times P \, : \, x \le_P y\}} \left[d_{\shf{S}(y)}\left(m_y\left(a_{f(y)}\right), m_y\left( \shf{R}(f(x) \le_Q f(y))\left(a_{f(x)}\right)\right)\right) + \epsilon\right]^2}\\
	&\le \sqrt{\sum_{\{(x,y) \in P \times P \, : \, x \le_P y\}} \left[K d_{\shf{R}(f(y))}\left(a_{f(y)}, \shf{R}(f(x) \le_Q f(y))\left(a_{f(x)}\right)\right) \right]^2} + C \epsilon\\
	&\le K \sqrt{\sum_{\{(u,v) \in Q \times Q \, : \, u \le_Q v\}} \left[d_{\shf{R}(v)}\left(a_{v}, \shf{R}(u \le_Q v)\left(a_{v}\right)\right)\right]^2} + C \epsilon\\
	&\le K c_{\shf{R}}(a) + C \epsilon. \qedhere
\end{align*}
\end{proof}



\section{Problem statement}
\label{sec-problem-statement}


\subsection{Underlying dynamical system}
\label{sec-problem-statement_subsec-dynamical-system}

Let $G := (V,E)$ be a finite directed graph in which vertices $V$ represent controllable \emph{pieces of equipment}
(whether physical or virtual)
and edges $E$ represent causal relationships between pieces of equipment.
We assume that for every vertex $v \in V$, there is a self-edge $(v,v) \in E$.
It is useful to define the \emph{$1$-hop neighborhood} of each vertex $v$ by
\[
U_v := \{ w \in V : \text{there exists a } (w,v) \in E \}.
\]
The presence of self-edges implies that $v \in U_v$ for all vertices $v$.
The intuition is that the $1$-hop neighborhood $U_v$ lists all of the vertices whose states impact the state of $v$.

Each vertex $v$ is labeled with a space of \emph{endogenous state variables} $S_v$ and a space of \emph{exogenous control variables} $C_v$.
(This framework includes autonomous systems as well -- one merely needs to supply extra state variables to $S_v$ to represent the state of the internal control system of a given piece of equipment.)
The state variables $S_v$ evolve deterministically according to a fixed set of causal rules,
but we interpret the values of $C_v$ as being determined outside the system,
so there is no corresponding assumption on their behavior.
In order to apply our methods, we assume that the $C_v$ and $S_v$ spaces
are arbitrary \emph{pseudometric spaces},
equipped with compatible \emph{Borel measures}.

Each piece of equipment's state is governed by a discrete-time dynamical system,
which is formalized as a continuous function from its state variables
and those of its neighbors
to its state variables.
For simplicity, we will assume that time is synchronized across all pieces of equipment,
which means that they all share the same time steps.
(This is not strictly necessary for our results to hold,
but greatly simplifies the analysis.)
The collection of variables at a vertex $v$ that could be involved in governing the state of $v$ at future times is denoted by
$R_v := C_v \times \prod_{w \in U_v} S_w$.
This formulation assumes that the control variables are only visible to vertex $v$ at the current time;
later times may implicitly see the effect of control variables in other vertices from previous times.
If it should be the case that we want to represent control variables that are visible to many vertices,
then our formulation can support that by way of replacing a set of vertices with a single vertex with a state aggregated across the original vertices.

Dynamics are represented by Lipschitz functions $f_v \colon R_v \to S_v$ local to each vertex $v\in V$.
Since it will be important when considering discretizations,
we note that Lipschitz functions are automatically Borel measurable functions.

A state for the system that follows all applicable physical laws and other system requirements is called \emph{feasible}.
We use $F_v \subseteq R_v$ to denote the space of feasible system states.
As a minor constraint,
we require that the evolution of a feasible state under the dynamics $f_v$ remains feasible.
Mathematically, $F_v$ must be an invariant set for $f_v$:
$f_v(F_v) \subseteq \pr_{S_v} F_v$,
where $\pr_A \colon A \times B \to A$ represents the projection of a product onto the listed factor.
Note that invariance does not preclude later infeasibility due to inappropriate control actions being taken later in time.


\subsection{Objective function}
\label{sec-problem-statement_subsec-objective-fnc}

The goal of the operator of the network system is to arrange for the system's state to meet certain requirements.
For each vertex $v$,
we define \emph{objective functions}
$J_v \colon S_v \to \mathbb{R}$ and $J'_v \colon R_v \to \mathbb{R}$.
Each $J_v$ maps the state of $v$ to a nonnegative real number that can be interpreted as a penalty for having $v$ in that particular state.
Since the domain of $J'_v$ additionally contains control variables,
the value of $J'_v$ can be interpreted as supplying a cost for choosing certain control actions.
Although it is reasonable to assume that $J_v$ and $J'_v$ agree about the penalties for $v$ being in a certain state,
we need not require this.
The operator's goal is therefore represented as a minimization -- ideally to zero -- of each of these $J_v$ and $J'_v$ functions across all vertices in the network.
To better manage noise in the system,
we will minimize a global objective function given by the sum of squares of the $J_v$ and $J'_v$ functions.


\section{Encoding as a sheaf}
\label{sec-sheaf-encoding}

This section explains how to construct sheaves that encode the problem setup described in Section \ref{sec-problem-statement}.
In these sheaves, the base space topology comes from the network topology in a rather direct way:
it is the Alexandrov topology on the \emph{face partial order} for the graph.
The dynamical structure of the model is encoded in the \emph{restriction maps} of the sheaves.
This section constructs five sheaves:
\begin{enumerate}
\item $\shf{N}$, whose global sections correspond to the state of the network at a single time step (Proposition \ref{prop:net_state_sheaf}),
\item $\shf{M}$, whose global sections correspond to solutions to the optimal control problem for a time-invariant system (Proposition \ref{prop:static_optimal}),
\item $\shf{L}$, whose global sections propagate state from one time step to the next,
\item $\shf{T}$, whose global sections correspond to feasible trajectories of the network through time (Proposition \ref{prop:dynamic_sheaf}), and
\item $\shf{S}$, whose global sections correspond to solutions to the optimal control problem (Theorem \ref{thm:dynamic_optimal}).
\end{enumerate}


\subsection{Single time step as a sheaf}
\label{sec-sheaf-encoding_subsec-single-time-step}

It is useful to render the graph $G = (V,E)$ described in Section \ref{sec-problem-statement} as a partially ordered set $(P,\le)$,
in which $P=V \cup \left\{ U_v : v \in V \right\}$.
There are two classes of elements in $P$:
(1) vertices $v$, and
(2) $1$-hop neighborhoods $U_v$ of vertices.
The order relation on $P$ is then generated by all relations of the form $U_v \le w$ if $w\in U_v$.

This partial order is ranked with two levels:
the $1$-hop neighborhoods are on the lower level,
and the vertices are on the upper level.
As a result of this structure,
the transitivity axiom for a partial order is trivial,
since the only way that a chain of relations like
$x \le y \le z$
can occur is if $x = y$ or $y = z$.
Consequently, the transitivity axiom for sheaves is trivially satisfied by any choice of stalks and restrictions on this partial order,
so any choice of restriction functions that agree about their domains whenever they have a $U_v$ in common will define a sheaf.

We proceed to define a sheaf $\shf{N}$ that represents the state of the network at a given time.
The sheaf $\shf{N}$ assigns the stalk $\shf{N}(v) := S_v$,
the set of endogenous state variables, to each vertex $v$,
and assigns the stalk $\shf{N}(U_v):=F_v$,
the set of feasible state and control variables, to the $1$-hop neighborhood of $v$.
The restriction maps are given by projections onto the state variables,
namely $\shf{N}(U_v \le w) := \pr_{S_w}$,
a function $F_v \to S_w$ whenever $w\in U_v$.
This is well-defined since $F_v \subseteq R_v$,
and $R_v$ is a product of the control variables $C_v$ and the state variables $S_w$ of every edge $(w,v) \in E$ incident to $v$.

With this construction,
a typical sheaf diagram for $\shf{N}$ will look like
\begin{equation} \label{eqn:shf-N}
\xymatrix @R = 1pc @C = 2.5pc {
\dotsb &  &                                                                                                        S_w & S_v & &\dotsb\\
\dotsb & F_w \ar[ul] \ar[ur]^{\pr_w}\ar[urr]_-(0.5){\pr_v} &&& F_v \ar[ull]^-(0.5){\pr_w} \ar[ul]_{\pr_v} \ar[ur] &\dotsb
}
\end{equation}


\begin{proposition}
\label{prop:net_state_sheaf}
The global sections of $\shf{N}$ are precisely the labelings of each vertex with feasible values for its respective control and state variables.
\end{proposition}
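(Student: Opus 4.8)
The plan is to prove the asserted equivalence by directly unwinding the definition of a global section of $\shf{N}$, taking full advantage of the fact that the base poset $(P,\le)$ is ranked into exactly two levels. As observed just before the construction of $\shf{N}$, every nontrivial relation in $P$ has the form $U_v \le w$ with $w \in U_v$, and there are no chains of length three. Consequently an assignment $s$ to $\shf{N}$ is a global section precisely when it satisfies the single family of equations $s_w = \left(\shf{N}(U_v \le w)\right)(s_{U_v}) = \pr_{S_w}(s_{U_v})$ for every $v \in V$ and every $w \in U_v$; there are no composite restrictions to check, so the entire content of the section condition is that each top-level value $s_w \in S_w$ coincides with the $S_w$-component of every feasible tuple $s_{U_v} \in F_v$ whose neighborhood contains $w$.

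First I would dispatch the forward direction. Suppose $s$ is a global section. For each vertex $v$ we have $s_{U_v} \in \shf{N}(U_v) = F_v$, so $s_{U_v}$ is by definition a feasible assignment of the control variables $C_v$ together with the neighboring state variables indexed by $U_v$, recalling $F_v \subseteq R_v = C_v \times \prod_{w \in U_v} S_w$. The section equations then show that the top-level values carry no independent information: each $s_w$ is forced to equal $\pr_{S_w}(s_{U_v})$. Thus $s$ is exactly a labeling of the network by feasible values, with the vertex states read off as projections of the feasible neighborhood data. For the converse I would start from such a feasible labeling, namely a choice $t_v \in F_v$ for each $v$ that agrees on overlaps, and set $s_{U_v} := t_v$ and $s_w := \pr_{S_w}(t_v)$ for any $v$ with $w \in U_v$; by construction this $s$ satisfies every section equation and is therefore a global section.

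The hard part will be isolating and handling the one genuinely nontrivial point: the well-definedness of the vertex values under the backward correspondence. A given vertex $w$ typically lies in several neighborhoods $U_v$ simultaneously, one for each edge $(w,v) \in E$, so the expression $\pr_{S_w}(t_v)$ must be shown to be independent of which $v$ is used. The self-edge assumption guarantees $w \in U_w$, so at least one admissible $v$ always exists, but independence of the choice is exactly the consistency requirement that $\pr_{S_w}(t_v) = \pr_{S_w}(t_{v'})$ whenever $w \in U_v \cap U_{v'}$. I would therefore make explicit that the phrase \emph{feasible labeling of the network} is to be read as a consistent one, and emphasize that the section condition is precisely equivalent to this agreement of all such feasible tuples on their shared $S_w$-components. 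With that identification in place, both inclusions close immediately and the stated equivalence follows.
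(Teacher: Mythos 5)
Your proof is correct, and its core --- unwinding the section condition on the two-level poset $(P,\le)$ into the single family of equations $s_w = \pr_{S_w}(s_{U_v})$ for $w \in U_v$, with no composite restrictions to check --- is exactly the paper's approach. Where you diverge is in the converse, and the divergence comes from how you parse the word ``labeling.'' The paper reads it as per-vertex data: a state $s_v \in S_v$ and a control $c_v \in C_v$ at each vertex, with feasibility meaning that the tuple $(c_v, s_v, s_{w_1}, \dotsc)$ assembled from these labels lies in $F_v$ for every $v$. Under that reading the converse is a one-line assembly: each neighborhood value is built from the same global family of vertex labels, so projection onto each $S_w$ recovers $s_w$ by construction, and the well-definedness question you isolate as ``the hard part'' never arises. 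You instead take the labeling to be per-neighborhood data --- a tuple $t_v \in F_v$ for each $v$ --- which forces you to impose agreement on overlaps, $\pr_{S_w}(t_v) = \pr_{S_w}(t_{v'})$ whenever $w \in U_v \cap U_{v'}$, as part of what ``feasible labeling'' means. The two readings are equivalent (consistent tuple families and feasibly-assembling vertex labels determine one another, the self-edge assumption guaranteeing every vertex lies in at least one neighborhood), so your argument closes correctly; but the paper's choice of primitive data both matches the statement's phrasing ``labelings of each vertex'' more directly and dissolves the well-definedness check rather than resolving it by stipulation. Your version does have the modest virtue of making explicit why, in the forward direction, tuples at overlapping neighborhoods automatically agree: both project to the common top-level value $s_w$.
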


\begin{proof}
Given such a labeling, we have a value $s_v \in S_v$ for the state variables at each vertex and a value $c_v \in C_v$ for each control variable.
The $s_v$ specify values in each of the vertices of the base space for $\shf{N}$,
and there are no consistency checks needed downstream since this is the top level of the partial order.
However, given a $1$-hop neighborhood of vertex $v$,
there is precisely one way to assemble a value
$(c_v, s_v, s_{w_1}, s_{w_2}, \dotsc) \in C_v \times \prod_{w \in U_v} S_w$.
Notice that projection of this tuple onto each $S_w$ recovers the value of the labeling at $w$.
By assumption, this tuple is feasible,
and so is also an element of $F_v$.
Therefore, the labeling corresponds to a global section.

Conversely, given a global section of $\shf{N}$,
the values of the state variables are the values of the section at each vertex,
and the control variable values are the first component of the value of the section at each $1$-hop neighborhood.
Feasibility of the labeling follows from the consistency of the section and the choice of stalks at the $1$-hop neighborhoods in $\shf{N}$.
\end{proof}



\subsection{Performance against the objective}
\label{sec-sheaf-encoding_subsec-performance-obj}

To encode the problem of optimal control as the problem of finding a particular assignment to a sheaf,
we need to bring the objective functions $J_v$ into the specification of a sheaf.
We use $\shf{N}$ as a starting point and extend its partial order to include two new kinds of restriction maps
that capture both the individual $J_v$ and
the requirement that these functions be minimized.

Consider the same set $P = V \cup \left\{U_v : v \in V \right\}$ of vertices and $1$-hop neighborhoods of vertices,
but with the trivial partial order $\le'$ for which $x \le' y$ if and only if $x = y$ in $(P, \le')$.
Any sheaf on $(P, \le')$ has no nontrivial restrictions, since different elements are incomparable.
Let $\widehat{\mathbb{R}}$ be the sheaf on $(P,\le')$ in which every stalk is $\mathbb{R}$.
(Each restriction of $\widehat{\mathbb{R}}$ is the identity map $\mathbb{R} \to \mathbb{R}$ since it is along $x \le' x$.)
Every assignment of $\widehat{\mathbb{R}}$ is therefore a global section,
and consists of a choice of a real number label on each vertex and on each neighborhood.

The sheaf $\widehat{\mathbb{R}}$ has a zero section
(simply choose $0$ from every stalk)
and this section may also be thought of as the trivial sheaf $\widehat{0}$ defined in Section \ref{sec-basic_sheaves}.
There is a natural sheaf morphism $\widehat{0} \to \widehat{\mathbb{R}}$
in which every component map is the zero map.

The identity function $i \colon P \to P$ is trivially an order preserving function $\mbox{$(P,\le')$} \to \mbox{$(P,\le)$}$,
since the domain has no nontrivial chains.
We can use $i$ to collect the $J_v$ and $J'_v$ into a sheaf morphism $J \colon \shf{N} \to \widehat{\mathbb{R}}$,
with the $J_v$ being the component maps on vertices and the $J'_v$ being the component maps on the $1$-hop neighborhoods.

Extending the diagram for $\shf{N}$ \eqref{eqn:shf-N} yields the following,
denoted $\shf{M}$:
\begin{equation} \label{eqn:shf-M}
\xymatrix @R = 1pc @C = 2.5pc {
\widehat{0} \ar[d] & \dotsb & 0 \ar[d] & 0 \ar[d] & 0 \ar[d]  & 0 \ar[d] &\dotsb \\
\widehat{\mathbb{R}} & \dotsb & \mathbb{R}  &                                                                                                        \mathbb{R} &\mathbb{R}  & \mathbb{R} &\dotsb \\
\shf{N} \ar[u]^{J} & \dotsb &  &                                                                                                        S_w\ar[u]^{J_w} & S_v\ar[u]^{J_v} & &\dotsb \\
	& \dotsb & F_w\ar[ul] \ar[ur]^{\pr_w} \ar[uu]^{J'_w} \ar[urr]_-(0.5){\pr_v} &&& F_v \ar[uu]^{J'_v} \ar[ull]^-(0.5){\pr_w} \ar[ul]_{\pr_v} \ar[ur] &\dotsb
}
\end{equation}


\begin{remark}
The diagram $\shf{M}$ \eqref{eqn:shf-M} is (trivially) a sheaf on a partial order.
Each stalk of $\widehat{\mathbb{R}}$ is above or incommensurate with every stalk of $\shf{N}$,
and the same holds for $\widehat{\mathbb{R}}$ and $\widehat{0}$.
As a result, there are no new commutativity constraints to check.
\end{remark}



\begin{proposition}
\label{prop:static_optimal}
The problem of finding an optimal control $c \in C_v$ for each vertex $v \in V$
is equivalent to the problem of finding an assignment $a$ that minimizes consistency radius on the sheaf $\shf{M}$
subject to the constraint that the restriction of $a$ to the sheaf $\shf{N}$ is a global section.
\end{proposition}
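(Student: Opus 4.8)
The plan is to use Proposition~\ref{prop:net_state_sheaf} to reinterpret the constraint, and then to read the objective directly off the consistency radius. First I would observe that, since the constraint concerns only the restriction of $a$ to $\shf{N}$, an admissible assignment to $\shf{M}$ is precisely the data of a global section of $\shf{N}$ together with a free choice of a real number on each stalk of $\widehat{\mathbb{R}}$; the stalks of $\widehat{0}$ carry no choice, as each is the one-point set $\{0\}$. By Proposition~\ref{prop:net_state_sheaf}, the $\shf{N}$-section part is exactly a feasible labeling, i.e.\ a choice of state variables $s_v$ and control variables $c_v$ satisfying all system constraints. Thus admissible assignments are in bijection with feasible labelings augmented by the auxiliary real labels on $\widehat{\mathbb{R}}$.

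Next I would decompose $c_{\shf{M}}(a)^2$ according to the type of restriction. Because $a|_{\shf{N}}$ is a section, every restriction internal to $\shf{N}$ (the projections $\pr$) contributes zero. The surviving terms are exactly those produced by the two morphisms built into $\shf{M}$: each restriction coming from $J$ compares the real label at a stalk of $\widehat{\mathbb{R}}$ against $J_v(s_v)$ or $J'_v(r_v)$, while each restriction coming from $\widehat{0} \to \widehat{\mathbb{R}}$ compares the same real label against $0$. Hence, for each element $x$ of the base of $\widehat{\mathbb{R}}$, the real label $a_x$ occurs only in terms of the form $(a_x - J_v(s_v))^2$ and $(a_x - 0)^2$, and these terms are decoupled across distinct $x$. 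The minimization over the auxiliary labels therefore splits into independent scalar least-squares problems, one per stalk of $\widehat{\mathbb{R}}$.

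I would then carry out each scalar minimization: balancing the pull of the $J$-restriction toward $J_v(s_v)$ against the pull of the $\widehat{0}$-restriction toward $0$ leaves a residual equal to a fixed positive multiple of $J_v(s_v)^2$, and likewise for each $J'_v(r_v)^2$. Summing, the minimum of $c_{\shf{M}}(a)^2$ over all admissible $a$ that share a fixed feasible labeling is a positive combination of the squared objective values $J_v(s_v)^2$ and $J'_v(r_v)^2$. This quantity vanishes exactly when every $J_v$ and $J'_v$ vanishes, and it is strictly increasing in each squared objective value, so its minimizers over feasible labelings coincide with the minimizers of the global objective $\sum_v J_v(s_v)^2 + \sum_v J'_v(r_v)^2$. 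Consequently, minimizing consistency radius on $\shf{M}$ subject to the $\shf{N}$-section constraint is equivalent to solving the optimal control problem.

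I expect the main obstacle to be the bookkeeping in the decomposition step: I must account for every comparable pair in the base partial order of $\shf{M}$, including those forced by transitivity (for instance, a neighborhood $U_v$ lies below the real stalk over each vertex $w \in U_v$), since the consistency radius sums over all such pairs rather than over Hasse edges alone. Verifying that these extra contributions still decouple by stalk of $\widehat{\mathbb{R}}$, and only rescale the per-vertex weights by a positive factor without mixing objective values across different vertices, is what guarantees that the minimized consistency radius is a faithful, zero-preserving surrogate for the objective, and hence that the two problems share the same optimal controls.
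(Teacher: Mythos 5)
Your decomposition is correct and, in one respect, more careful than the paper's own proof: the paper tacitly evaluates the consistency radius at the particular admissible assignment whose labels on $\widehat{\mathbb{R}}$ are the pushforward values $J_v(s_v)$ and $J'_v(r_v)$ (so that every $J$-comparison, including the transitive ones, vanishes by commutativity, and $c_{\shf{M}}(a)$ equals the global objective $j$ exactly), whereas you honestly minimize over the free real labels. But carry your own computation to the end and watch what it produces. The label over a neighborhood $U_v$ faces one pull toward $J'_v(r_v)$ and one toward $0$, leaving residual $\tfrac{1}{2}J'_v(r_v)^2$; the label over a vertex $w$ faces $k_w$ pulls toward $J_w(s_w)$ --- one from the Hasse edge $w \le \widehat{\mathbb{R}}_w$ plus one transitive pair $U_v \le \widehat{\mathbb{R}}_w$ for each $v$ with $w \in U_v$, so $k_w \ge 2$ because of self-edges --- and one pull toward $0$, leaving residual $\tfrac{k_w}{k_w+1}J_w(s_w)^2$. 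The inner-minimized squared consistency radius is therefore $\sum_w \tfrac{k_w}{k_w+1} J_w(s_w)^2 + \tfrac{1}{2}\sum_v J'_v(r_v)^2$, a combination whose weights vary with vertex degree and differ between the $J$-terms and the $J'$-terms.

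The gap is your final inference: ``strictly increasing in each squared objective value'' does not imply that this weighted sum has the same minimizers as the unweighted objective $\sum_v \bigl( J_v(s_v)^2 + J'_v(r_v)^2 \bigr)$ that Section \ref{sec-problem-statement_subsec-objective-fnc} fixes as the optimal control problem. Coordinatewise monotonicity guarantees agreement of argmins only when some feasible labeling minimizes every coordinate simultaneously (e.g., when zero objective is attainable, the ``ideally to zero'' case); at a nonzero optimum, two feasible labelings can trade objective value between a degree-$2$ term (weight $\tfrac{2}{3}$) and a $J'$-term (weight $\tfrac{1}{2}$) so that the weighted and unweighted sums order them oppositely. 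As written, your argument proves equivalence with a degree-weighted variant of the control problem, which is a different statement. To close it, either pin the $\widehat{\mathbb{R}}$-labels to the pushforward values as the paper implicitly does, so that the consistency radius is exactly $j$ and the equivalence is immediate, or restrict to the attainable-zero regime, or absorb the combinatorial weights $\tfrac{k_w}{k_w+1}$ and $\tfrac{1}{2}$ into the definition of the objective functions.
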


\begin{proof}
According to Proposition \ref{prop:net_state_sheaf},
the global sections on $\shf{N}$ correspond to feasible network states.
Specifically, if $a$ is an assignment that restricts to a global section on $\shf{N}$,
then it is a feasible network state.
Therefore, the consistency radius of $a$
is determined by the set of values
$\{(c_v,s_v, s_{w_1}, \dotsc) \in F_v : v \in V \}$.

A global section of $\shf{M}$ will result in zeros within $\widehat{\mathbb{R}}$ due to the zero morphism from $\widehat{0}$.
Since the sheaf morphism $J$ is essentially completely arbitrary,
the image of any given global section of $\shf{N}$ through the component maps of $J$ typically will not result in zeros on $\widehat{\mathbb{R}}$ -- and therefore not a global section on $\shf{M}$.
However, since $\widehat{\mathbb{R}}$ is a sheaf of pseudometric spaces,
we can compute consistency radius.
Since we are assuming a global section on $\shf{N}$,
the consistency radius is determined entirely by the assignment's values on $\widehat{\mathbb{R}}$.

The consistency radius for the assignment $a$ is the square root of the
sum of squares of differences of the form $|J'_v(c_v,s_v, s_{w_1}, \dotsc) - 0|$ and $|J_v(s_v) - 0|$,
where the $0$ terms are the images of the only assignment possible on $\widehat{0}$.
One merely needs to notice that the global objective function given by
\[ j((c_{v_1}, s_{v_1}), (c_{v_2}, s_{v_2}), \dotsc) := \sqrt{\sum_{v} \left( \left| J_v(s_v) \right|^2 + \left| J'_v(c_v, s_v, s_{w_1}, \dotsc ) \right|^2 \right)} \]
is the consistency radius,
so minimizing the consistency radius is precisely the problem of optimal control.
\end{proof}


As a consequence, the consistency radius should be interpreted as an aggregation of the residuals of the optimal control problem.
The local consistency radii capture the residuals on various subsets of vertices.

It may be dramatically easier to minimize the consistency radius over all of $\shf{M}$ without worrying about whether we have a global section on $\shf{N}$ (i.e., a fully consistent network state).
The local consistency radius on $\shf{N}$ then tells us by how much this ``relaxed'' solution differs from the fully-consistent one.


\begin{proposition}
\label{prop:static_sheafrelaxed_optimal}
The assignment $b$ that minimizes the consistency radius of the sheaf $\shf{M}$
without the constraint that $b$ restrict to a global section of the sheaf $\shf{N}$
differs from the true optimal control by at least a constant factor times its local consistency radius on $P$
(the base space of $\shf{N}$).
\end{proposition}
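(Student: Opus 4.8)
The plan is to reduce the claim to the lower bound on consistency radius furnished by Proposition \ref{sec-sheaf-encoding_prop-crbound}, but applied to the subsheaf $\shf{N}$ rather than to all of $\shf{M}$. By Proposition \ref{prop:static_optimal}, the true optimal control is realized by an assignment $a$ to $\shf{M}$ whose restriction to the base space $P$ of $\shf{N}$ is a \emph{global section} of $\shf{N}$. The relaxed optimum $b$ carries no such guarantee; the extent to which $b$ fails to restrict to a section of $\shf{N}$ is measured precisely by the local consistency radius $c_{\shf{N}}(b)$ of its restriction to $P$, namely the sum of squares of the mismatches $d_{S_w}\!\left(b_w, \pr_{S_w}(b_{U_v})\right)$ across the projection restrictions $U_v \le w$ of $\shf{N}$. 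Since $P$ sits inside the base space of $\shf{M}$, restricting $a$ and $b$ to $P$ is just a matter of reading off their values on the vertices and $1$-hop neighborhoods.

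First I would observe that every restriction map of $\shf{N}$ is a coordinate projection $\pr_{S_w}$, and that with respect to the (sum-of-squares) product pseudometric used throughout, such a projection is nonexpansive; hence $\shf{N}$ satisfies the hypothesis of Proposition \ref{sec-sheaf-encoding_prop-crbound} with Lipschitz bound $K = 1$. Applying that proposition to $\shf{N}$, with the global section taken to be the restriction of $a$ to $P$, gives
\[
c_{\shf{N}}(b) \le (1 + K)\, d_{\shf{N}}(a, b) = 2\, d_{\shf{N}}(a, b),
\]
where on the right $a$ and $b$ denote their restrictions to $P$. Rearranging yields $d_{\shf{N}}(a, b) \ge \tfrac{1}{2}\, c_{\shf{N}}(b)$, which already expresses that the relaxed solution $b$ differs from the true optimum by at least the constant factor $\tfrac{1}{2}$ times its local consistency radius on $P$. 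If one prefers to phrase the discrepancy using the full assignment pseudometric on $\shf{M}$, I would then note that $d_{\shf{N}}(a,b) \le d_{\shf{M}}(a,b)$, since the sum defining $d_{\shf{M}}$ ranges over all stalks of $\shf{M}$ — those of $P$ together with the additional stalks of $\widehat{\mathbb{R}}$ and $\widehat{0}$ — and thus dominates the sum over $P$ alone; this upgrades the bound to $d_{\shf{M}}(a,b) \ge \tfrac{1}{2}\, c_{\shf{N}}(b)$.

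The only substantive point requiring care is bookkeeping about \emph{which} sheaf and \emph{which} consistency radius appear where: Proposition \ref{sec-sheaf-encoding_prop-crbound} must be invoked for $\shf{N}$, not $\shf{M}$, so that the relevant quantity is the local consistency radius on $P$, and the global section witnessing the bound must be the restriction of the true optimum $a$ — legitimate precisely because Proposition \ref{prop:static_optimal} (via Proposition \ref{prop:net_state_sheaf}) guarantees that $a$ restricts to a section of $\shf{N}$. I expect no analytic obstacle beyond verifying that the projections are nonexpansive for the chosen product pseudometric; everything else is a direct application of the already-established lower bound together with the monotonicity of the assignment pseudometric under passage to a subset of the base space.
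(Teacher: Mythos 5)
Your proposal is correct and follows essentially the same route as the paper's own proof: apply Proposition \ref{sec-sheaf-encoding_prop-crbound} to $\shf{N}$ with $K=1$ (since the restrictions are projections), take the global section to be the restriction of the true optimum $a$ guaranteed by Proposition \ref{prop:static_optimal}, and then dominate $d_{\shf{N}}(a,b)$ by $d_{\shf{M}}(a,b)$ because the latter sums over a strictly larger set of stalks. Your explicit rearrangement to $d_{\shf{M}}(a,b) \ge \tfrac{1}{2}\, c_{\shf{N}}(b)$ merely makes the constant factor in the statement more visible than the paper's presentation does.
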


\begin{proof}
Let $b$ be an assignment that minimizes consistency radius on $\shf{M}$ without further constraints
and suppose that $a$ is an assignment on $\shf{M}$ that represents the solution to the optimal control problem.
Since $\shf{N}$ is defined on a subspace of the base space of $\shf{M}$,
$b$ restricts to an assignment $b'$ that minimizes consistency radius on $\shf{N}$ as well.
By Proposition \ref{prop:static_optimal},
$a$ restricts to a global section $a'$ on $\shf{N}$.
Since each restriction map in $\shf{N}$ is a projection and thus has Lipschitz constant $1$,
we can apply Proposition \ref{sec-sheaf-encoding_prop-crbound} with $K=1$ to see that
\[ c_{\shf{N}}(b') \le (1+1) d_{\shf{N}}(a',b') = 2 d_{\shf{N}}(a',b') \le 2 d_{\shf{M}}(a,b). \]
The final equality above arises because the assignment pseudometric on $\shf{M}$
contains a sum over a strictly larger set than that of $\shf{N}$.
Since $a$ is the solution to the optimal control problem,
$d_\shf{M}(a,b)$ is the distance between the solution to the optimal control problem
and its estimate $b$ found by minimizing consistency radius.
\end{proof}


The proof of Proposition \ref{prop:static_sheafrelaxed_optimal} ignores the structure of the $J$ morphism,
which implies that the bound obtained in the proof is loose.
The residual for the optimal control problem, $c_{\shf{M}}(a)$,
may be rather different from the residual for the sheaf-relaxed version, $c_{\shf{M}}(b)$.
Monotonicity for local consistency radii gives that
$0 = c_{\shf{N}}(a) \le c_{\shf{M}}(a)$
and
$c_{\shf{N}}(b) \le c_{\shf{M}}(b)$,
but since $a$ is not a global section on $\shf{M}$,
we cannot assert that $c_{\shf{M}}(b)$ is related to it.


\subsection{Evolving the dynamics}
\label{sec-sheaf-encoding_subsec-evolving-dynamics}

Consider the subset $U \subset P$ consisting of only the $1$-hop neighborhoods of each vertex.
Define a sheaf $\shf{L}$ on $U$ stalkwise with $\shf{L}(U_v) := S_v$ for each vertex $v$,
noting that there are no restrictions to be defined.

Define a sheaf morphism $p \colon \shf{N} \to \shf{L}$ along the inclusion of $U$ into $P$ whose component maps are given by the projections $p_v := \pr_{S_v}$.
In the absence of control variables (i.e., $C_v=\emptyset$ for all $v$),
then $p$ induces a bijection on the space of global sections.

The sheaf morphism for the dynamics $f \colon \shf{N} \to \shf{L}$ is constructed similarly to $p$.
In this case,
the component map at a $1$-hop neighborhood $U_v$ is given by $f_v$.


\begin{proposition}
\label{prop:dynamic_sheaf}
If we construct the diagram of sheaf morphisms from copies $\shf{L}_n := \shf{L}$ and $\shf{N}_{n} := \shf{N}$,
\[
\xymatrix{
\dotsb \ar[r] & \shf{L}_n & \shf{N}_n \ar[l]_p \ar[r]^f & \shf{L}_{n+1} & \shf{N}_{n+1} \ar[l]_p \ar[r]^f & \dotsb
}
\]
and reinterpret this as a new sheaf $\shf{T}$,
then global sections of $\shf{T}$ are precisely feasible trajectories of the network's state given control actions at each time step.
\end{proposition}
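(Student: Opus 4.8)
The plan is to first spell out exactly how the diagram of sheaf morphisms is reassembled into a single sheaf $\shf{T}$, and then to verify the claimed correspondence by reading off the section equations. For the reassembly I would use the general recipe for turning a morphism into restriction maps: a component map $m_x \colon \shf{R}(g(x)) \to \shf{S}(x)$ of a morphism $m \colon \shf{R} \to \shf{S}$ along an order-preserving $g$ is recorded as a restriction \emph{from} (a copy of) $g(x)$ \emph{up to} (a copy of) $x$, and the morphism's commutation identity becomes the sheaf's commutation identity. Applying this to the given diagram, the base poset of $\shf{T}$ consists, for each time index $n$, of a copy $P_n$ of $P$ carrying the two-level structure of $\shf{N}$ (vertices $v$ with stalk $S_v$ atop neighborhoods $U_v$ with stalk $F_v$), together with the elements of $\shf{L}_n$, one per vertex $v$ and each carrying the stalk $S_v$. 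Each neighborhood element $U_v$ of $P_n$ then sits below three kinds of elements: every vertex $w$ of $P_n$ with $w \in U_v$ (restriction $\pr_{S_w}$, internal to $\shf{N}_n$), the $\shf{L}_n$-element of $v$ (restriction $p_{U_v} = \pr_{S_v}$), and the $\shf{L}_{n+1}$-element of $v$ (restriction $f_v$, the dynamics). I would then observe that this poset is again ranked into exactly two levels --- the neighborhoods below, every other element above --- so there are no chains $x \le y \le z$ with distinct terms and the transitivity/commutativity axiom is vacuous; hence $\shf{T}$ is a genuine sheaf for exactly the same reason $\shf{N}$ was, and each relation receives a single unambiguous restriction since the three families target disjoint sets of elements.

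Next I would translate ``global section'' into equations. A global section $s$ of $\shf{T}$ assigns a feasible tuple $s_{U_v}^{\,n} \in F_v$ to each neighborhood, a value in $S_v$ to each vertex, and a value in $S_v$ to each $\shf{L}_n$-element, subject to: (a) within each $P_n$, the vertex values are the corresponding projections of the neighborhood tuples --- equivalently, $s$ restricts to a global section of $\shf{N}_n$; (b) each $\shf{L}_n$-value equals $\pr_{S_v}$ of the time-$n$ neighborhood tuple; and (c) each $\shf{L}_{n+1}$-value equals $f_v$ applied to the time-$n$ neighborhood tuple. By Proposition \ref{prop:net_state_sheaf}, condition (a) at each $n$ says precisely that $s$ describes a feasible network state at every time step. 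Eliminating the auxiliary $\shf{L}$-values by matching (b) at time $n+1$ against (c) yields, for every vertex $v$ and time $n$, the single equation $\pr_{S_v}(s_{U_v}^{\,n+1}) = f_v(s_{U_v}^{\,n})$ --- i.e.\ the state of $v$ at time $n+1$ is exactly the dynamics applied to $v$'s feasible neighborhood state at time $n$. This is the defining property of a feasible trajectory driven by the chosen control actions, which ride along inside the $F_v$-valued data. The converse is the same computation run backwards: a feasible trajectory determines feasible network states on each $P_n$ and forced values on the $\shf{L}_n$-elements via (b), after which (a)--(c) hold by feasibility, by consistency of each network state, and by the trajectory's dynamics equation; the resulting assignment is therefore a global section, and the two passages are mutually inverse.

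The only genuine obstacle is the bookkeeping in the first step: being explicit and correct about which copy of which element lies below which, and in particular recognizing the role of the $\shf{L}_n$-elements as the sites where the ``actual'' state at time $n$ (fed in by $p$ from $\shf{N}_n$) is forced to agree with the state ``predicted'' by the dynamics from time $n-1$ (fed in by $f$ from $\shf{N}_{n-1}$). Once that identification is pinned down and the trivially commutative two-level structure is noted, the remaining verifications are a routine chase through the section equations.
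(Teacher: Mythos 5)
Your proof is correct and takes essentially the same route as the paper's own (very terse) proof, which simply invokes Proposition \ref{prop:net_state_sheaf} at each time step and interprets the $f_v$ as component maps of a sheaf morphism---exactly the argument you carry out in full. Your explicit construction of the base poset of $\shf{T}$, the observation that it remains two-level so that commutativity is vacuous, and the elimination of the $\shf{L}_n$-values to obtain $\pr_{S_v}\bigl(s_{U_v}^{\,n+1}\bigr) = f_v\bigl(s_{U_v}^{\,n}\bigr)$ are faithful expansions of the details the paper leaves implicit.
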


\begin{proof}
We merely need to leverage Proposition \ref{prop:net_state_sheaf}
and employ the dynamics at each time step by interpreting $f_v$ as component maps of a sheaf morphism.
\end{proof}


The diagram for a single time step within the sheaf $\shf{T}$ is somewhat large,
but is ultimately straightforward to decompose:
\[
\xymatrix @R = 0.6pc @C = 2.5pc {
\shf{N}_{n} \ar[dd]_f & \dotsb & &                                                                                                         S_w & S_v && \dotsb\\
& \dotsb & F_w \ar[dr]^{f_w} \ar[ul] \ar[ur]^{\pr_w} \ar[urr]_-(0.5){\pr_v} &&& F_v \ar[dl]_{f_v} \ar[ull]^-(0.5){\pr_w} \ar[ul]_{\pr_v} \ar[ur] &\dotsb\\
\shf{L}_{n+1} & \dotsb   && S_w  & S_v &  &\dotsb \\
& \dotsb & F_w \ar[ur]^{p_w} \ar[dl] \ar[dr]_{\pr_w} \ar[drr]^-(0.5){\pr_v} &&& F_v \ar[ul]_{p_v} \ar[dll]_-(0.5){\pr_w} \ar[dl]^{\pr_v} \ar[dr] &\dotsb\\
\shf{N}_{n+1}\ar[uu]^p& \dotsb &    &                                                                                                      S_w & S_v && \dotsb
}
\]


\subsection{Optimal control as a diagram of sheaves}
\label{sec-sheaf-encoding_subsec-opt-control-sheaves}

Given our constructions thus far,
we may simply compose diagrams to obtain the optimal control problem through
finitely many time steps as a consistency radius minimization.
We emphasize that the correct interpretation is that the objective functions are taken to be nonnegative,
with $0$ being the goal.


\begin{theorem}
\label{thm:dynamic_optimal}
Consider the problem of optimal control of the network for finitely many time steps.
If we aggregate the sheaves from the previous sections into one big diagram,
itself a sheaf $\shf{S}$,
constructed as the diagram
\begin{equation}
\label{eqn:dynamic_optimal}
\xymatrix @R = 1pc {
\dotsb \ar[r] & \shf{L}_n & \shf{N}_n \ar[l]_p \ar[r]^f \ar[d]^J & \shf{L}_{n+1} & \shf{N}_{n+1} \ar[l]_p \ar[r]^f \ar[d]^J & \dotsb \\
                   &                & \widehat{\mathbb{R}}                &                      & \widehat{\mathbb{R}}                        &           \\
                   &                & \widehat{0} \ar[u]                &                      & \widehat{0} \ar[u]                      &
}
\end{equation}
and minimize consistency radius over this diagram under the constraint that we manipulate global sections of $\shf{N}_\bullet$,
then this is precisely the same as solving an optimal control problem.
\end{theorem}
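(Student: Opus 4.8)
The plan is to assemble the statement from pieces already in hand: Proposition \ref{prop:dynamic_sheaf}, which identifies the feasible trajectories with the global sections of the temporal sheaf $\shf{T}$ built from the $\shf{N}_n$ and $\shf{L}_n$, and the consistency-radius computation inside the proof of Proposition \ref{prop:static_optimal}, which at a single time step equates consistency radius against the objective morphisms with the global cost. First I would verify that the diagram \eqref{eqn:dynamic_optimal} really does define a sheaf $\shf{S}$. Its base poset is obtained by gluing the base of $\shf{T}$ to the objective layers $\widehat{\mathbb{R}}$ and $\widehat{0}$ attached at each $\shf{N}_n$ exactly as in $\shf{M}$ \eqref{eqn:shf-M}. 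The only potentially new commutativity constraints are those from chains leaving an $\shf{N}_n$ stalk in two directions; but $p$ and $f$ send $\shf{N}_n$ upward into the $\shf{L}$-layers while $J$ sends it into a disjoint copy of $\widehat{\mathbb{R}}$, and these targets are mutually incomparable, so every maximal chain lies within a single previously verified piece. Hence $\shf{S}$ inherits transitivity from $\shf{T}$ and $\shf{M}$ and is a sheaf of pseudometric spaces.

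Next I would unpack the constraint. To manipulate global sections of $\shf{N}_\bullet$ means to restrict attention to assignments $a$ of $\shf{S}$ whose restriction to the subsheaf spanned by the $\shf{N}_n$ and $\shf{L}_n$ --- that is, to $\shf{T}$ --- is a global section. By Proposition \ref{prop:dynamic_sheaf} these are exactly the feasible trajectories: at each time step the values are feasible by Proposition \ref{prop:net_state_sheaf}, and the dynamics morphisms $f$ together with the propagation morphisms $p$ force each time step to evolve consistently into the next through the $\shf{L}_{n+1}$ layer. Thus the admissible assignments range over precisely the feasible controlled trajectories of the network across the finitely many time steps.

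Finally I would compute the consistency radius of such an $a$ on $\shf{S}$, taking the values on the objective layers to be the images under the $J$-morphisms exactly as in the proof of Proposition \ref{prop:static_optimal}. Because consistency radius is a single sum over all restriction pairs, it splits into the pairs internal to $\shf{T}$, the $\shf{N}_n \to \widehat{\mathbb{R}}$ pairs, and the $\widehat{0} \to \widehat{\mathbb{R}}$ pairs. The pairs internal to $\shf{T}$ contribute $0$ because $a$ restricts to a global section there, and the $\shf{N}_n \to \widehat{\mathbb{R}}$ pairs contribute $0$ because the objective-layer values are the $J$-images; the only survivors are the $\widehat{0} \to \widehat{\mathbb{R}}$ pairs, which at time step $n$ contribute $\sum_v (|J_v(s_v^{(n)})|^2 + |J'_v(c_v^{(n)}, s_v^{(n)}, s_{w_1}^{(n)}, \dotsc)|^2)$ just as in the static case. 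Summing over time steps exhibits $c_{\shf{S}}(a)$ as the square root of the total objective accumulated along the trajectory, so minimizing consistency radius under the constraint is exactly the optimal control problem. The step I expect to require the most care is the commutativity verification that makes $\shf{S}$ a sheaf, and, hand in hand with it, the precise reading of the global-section constraint on $\shf{N}_\bullet$: one must confirm that this single constraint simultaneously encodes intra-step feasibility and the inter-step dynamics, so that the objective layers remain free to register cost without becoming entangled with the feasibility bookkeeping.
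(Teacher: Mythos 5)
Your proposal is correct and follows essentially the same route as the paper: the published proof is a one-line restatement of Propositions \ref{prop:static_optimal} and \ref{prop:dynamic_sheaf} on the larger diagram (noting only that the objective sums stay finite over finitely many time steps), and your argument is a faithful expansion of exactly that --- reading the constraint as restricting to a global section of the temporal subsheaf $\shf{T}$, verifying no new commutativity conditions arise from the objective layers (the paper's remark after \eqref{eqn:shf-M}), and splitting the consistency radius so that only the $\widehat{0} \to \widehat{\mathbb{R}}$ terms survive and reproduce the accumulated objective. No gaps; your extra care about sheaf-hood of $\shf{S}$ and the precise meaning of the $\shf{N}_\bullet$ constraint merely makes explicit what the paper leaves implicit.
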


\begin{proof}
The proof is just a restatement of Propositions \ref{prop:static_optimal} and \ref{prop:dynamic_sheaf} on a larger diagram.
The sums in defining the objective functions%
\footnote{If we wish to consider infinitely many time steps, then we can replace the sums with an infinite series or a supremum.}
remain finite under this situation since there are only finitely-many time steps involved.
\end{proof}


Again, we get the added benefit of a relaxed version of the problem.
If we minimize consistency radius without constraints instead of solving the problem of minimizing consistency radius subject to a portion being a global section,
then this is likely much easier.
However, we can track the error in this relaxation simply by computing the local consistency radius on the top row of the diagram for $\shf{S}$ \eqref{eqn:dynamic_optimal}.
The constant relating the consistency radius to the accumulated error consists of the aggregate of the Lipschitz constants of all of the $f_v$, $J_v$, and $J'_v$ functions.


\section{Relaxing to a Boolean model}
\label{sec-btm-relaxation}

In its general form as described in Section \ref{sec-problem-statement},
the problem of optimal control can be computationally intensive, if not impossible, to solve.
Typical approaches to solving such problems rely on carefully-chosen relaxation schemes
that allow efficient solutions to difficult problems,
generally at the expense of accuracy.
In this section,
we turn our attention to a class of optimal control problems that naturally facilitates a Boolean relaxation scheme,
and analyze these problems with the methods developed in Section \ref{sec-sheaf-encoding}.
Such problems are attractive because their relaxed Boolean forms can often be expressed as integer programs,
for which efficient solution algorithms exist.
An optimal solution to the Boolean-relaxed problem can be translated into a solution to the original problem,
and while such a solution is not necessarily optimal in the original domain,
it is often ``good enough.''

We note that the methods utilized in this section focus on Boolean relaxations,
but they are quite general and can be applied to other discrete relaxations.


\subsection{Boolean discretization}
\label{sec-btm-relaxation_subsec-thresholding}

Recall that a space of endogenous state variables $S_v$ and a space of exogenous control variables $C_v$ are associated with each piece of equipment $v$.
We assume that some subset of the state space $S_v$ is considered to contain ``operational'' states,
and that the values in the complement of that subset are considered ``failed.''
Similarly,
each value in the control space $C_v$ can be discretized to one of two options:
``apply control to $v$'' or ``do not apply control to $v$.''

Formally, for each piece of equipment $v \in V$,
let $\wtSv := \{0,1\}^{\dim(S_v)}$ be the Boolean version of the state space $S_v$
and let $\wtCv := \{0,1\}^{\dim(C_v)}$ be the Boolean version of the control space $C_v$.
Intuitively, a value of $1$ means that a piece of equipment is operational,
while a value of $0$ means that it is failed.
We can combine these spaces to create the Boolean version of the system state space $R_v$,
$\wtRv := \wtCv \times \prod_{w \in U_v} \widetilde{S_w}$.
As in the original case,
it is possible that not every Boolean system state in $\wtRv$ has a sensible interpretation,
so we use $\wtFv \subseteq \wtRv$ to denote the space of feasible Boolean system states for the vertex $v$.
While it is not necessary that $\wtFv$ be functionally related to $F_v$,
we require that functions defined between the original and Boolean spaces \emph{preserve feasibility},
i.e., map feasible inputs to feasible outputs.
Feasibility preservation is a reasonable requirement that is likely to be satisfied by most sensible discretization schemes,
since a feasible solution to the Boolean-relaxed problem is not very useful if it does not correspond to a feasible analogue in the original problem domain,
and it would be unintuitive for a feasible solution in the original domain to have an infeasible Boolean relaxation.

In order to compare system states and their relaxed counterparts,
we require measurable,%
\footnote{Recall from Section \ref{sec-problem-statement_subsec-dynamical-system} that $C_v$ and $S_v$ are pseudometric spaces for all $v \in V$.}
feasibility-preserving functions that map between the original spaces and their corresponding Boolean spaces.


\begin{definition} \label{def:thresholding-functions}
For each piece of equipment $v \in V$,
select measurable functions
$\tau_v \colon S_v \to \wtSv$ and $\chi_v \colon C_v \to \wtCv$
such that
$\tau_v(\pr_{S_v}(F_v)) \subseteq \pr_{\wtSv}(\wtFv)$ and that
$\chi_v(\pr_{C_v}(F_v)) \subseteq \pr_{\wtCv}(\wtFv)$.
Extend $\chi_v$ and $\{ \tau_w : w \in U_v \}$ to the function $\sigma_v \colon R_v \to \wtRv$ defined as
\begin{equation} \label{eqn:sigma_v}
\sigma_v(c_v, s_v, s_{w_1}, \dotsc) := (\chi_v(c_v), \tau_v(s_v), \tau_{w_1}(s_{w_1}), \dotsc).
\end{equation}
The functions $\tau_v$, $\chi_v$, and $\sigma_v$ are called \emph{(Boolean) thresholding functions}.
\end{definition}


\begin{definition} \label{def:lifting-function}
For a piece of equipment $v \in V$,
a \emph{lifting function} is any measurable function
$\gamma_v \colon \wtRv \to R_v$
such that
$\gamma_v( \wtFv ) \subseteq F_v$.
\end{definition}


We are free to define the functions $\tau_v$, $\chi_v$, and $\gamma_v$ in any sensible way,
although the choice of thresholding functions will often be dictated by the structure of the original problem and the desired Boolean relaxation.
Note that measurability and feasibility preservation for $\tau_v$ and $\chi_v$ imply that $\sigma_v$ is also measurable and feasibility-preserving.
In most cases,
a given $\sigma_v$ will be a many-to-one function,
but $\gamma_v$ should be injective.
Thresholding usually implies a loss of fidelity,
since although we might be able to arrange for
$\sigma_v \circ \gamma_v|_{\wtFv} = \id_{\wtFv}$,
rarely will we have $\gamma_v \circ \sigma_v|_{F_v} = \id_{F_v}$.

The final component of the relaxation scheme is a set of dynamics functions that govern how the system behaves in the Boolean space.
For each piece of equipment $v \in V$,
we choose a measurable \emph{Boolean dynamics function} $\wtfv \colon \wtRv \to \wtSv$
that serves as an approximation of the original dynamics $f_v$ in the Boolean space.
(Recall that since each $f_v$ is assumed to be a Lipschitz function,
it is automatically Borel measurable.)
Analogously to the original dynamics function $f_v$,
we require that feasible Boolean system states evolve to feasible states:
$\wtfv( \wtFv ) \subseteq \pr_{\wtSv}( \wtFv )$.
Selection of an appropriate $\wtfv$ is critically important to the success of the relaxation,
and can unfortunately be difficult in the case of complex systems with poorly-understood dynamics.

Since our goal is to use the Boolean-relaxed system to efficiently identify candidate solutions to the original problem,
quantifying the error incurred by the thresholding process is of interest.
If this error is large,
then any performance benefit obtained by relaxation will be wasted by solving the wrong problem!
To assist with the error analysis,
consider the following diagram,
which demonstrates the interaction of the original and relaxed spaces and functions:
\begin{equation} \label{eqn:btm-stalk-diagram}
\xymatrix @R = 1.5pc @C = 3pc {
F_v \ar[d]_-{f_v} \ar@/_/[r]_{\sigma_v} & \wtFv \ar@/_/[l]_-{\gamma_v} \ar[d]^-{\wtfv}\\
S_v \ar[r]_{\tau_v}  & \wtSv\\
}
\end{equation}

In general, the diagram \eqref{eqn:btm-stalk-diagram} will not commute,
and a lack of commutativity along the thresholding functions will induce error in our relaxation scheme.
Specifically, if thresholding could be performed without error,
then the values $(\tau_v \circ f_v)(x)$ and $(\wtfv \circ \sigma_v)(x)$ would be equal for all $x \in F_v$;
the interpretation in this case is that it does not matter whether we
propagate $x$ in the original space and threshold our result,
or threshold $x$ and then propagate in the Boolean space.

To quantify error requires a metric,
and since $\wtFv$ and $\wtSv$ are vector spaces,
selecting a metric induced by a norm on each is appropriate.
Furthermore, since each function under discussion is measurable,
we may select a well-defined operator norm for each space of functions;
this includes the spaces of functions whose domains are $F_v$ or $S_v$,
such as $\sigma_v$ and $\tau_v$.
With this in mind,
we turn our attention to bounding the worst-case thresholding error,
$\| \wtfv \circ \sigma_v|_{F_v} - \tau_v \circ f_v|_{F_v} \|$.


\begin{proposition} \label{prop:thresholding-error-bound}
Fix $v \in V$ and let $\sigma_v$ and $\tau_v$ be Boolean thresholding functions, $\gamma_v$ be a lifting function, and $\wtfv$ be a Boolean dynamics function.
If $(\omega_1)_v := \|\wtfv - \tau_v \circ f_v \circ \gamma_v|_{\wtFv}\|$
and $(\omega_2)_v := \|\gamma_v \circ \sigma_v|_{F_v} - \id_{F_v}\|$,
then
\[
\| \wtfv \circ \sigma_v|_{F_v} - \tau_v \circ f_v|_{F_v} \| \leq (\omega_1)_v \|\sigma_v|_{F_v}\| + (\omega_2)_v \|\tau_v \circ f_v|_{F_v}\|.
\]
\end{proposition}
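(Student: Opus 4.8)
The plan is to prove the bound by a single add-and-subtract (telescoping) step, inserting the composite that travels around the perimeter of diagram \eqref{eqn:btm-stalk-diagram}. The natural intermediate is $\tau_v \circ f_v \circ \gamma_v \circ \sigma_v|_{F_v}$: starting from $F_v$, one thresholds with $\sigma_v$, lifts back with $\gamma_v$, propagates in the original space with $f_v$, and finally thresholds the output with $\tau_v$. This composite is engineered so that its difference from $\wtfv \circ \sigma_v|_{F_v}$ is exactly what $(\omega_1)_v$ controls, while its difference from $\tau_v \circ f_v|_{F_v}$ is exactly what $(\omega_2)_v$ controls.

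Concretely, I would first write
\[
\wtfv \circ \sigma_v|_{F_v} - \tau_v \circ f_v|_{F_v} = \bigl( \wtfv \circ \sigma_v|_{F_v} - \tau_v \circ f_v \circ \gamma_v \circ \sigma_v|_{F_v} \bigr) + \bigl( \tau_v \circ f_v \circ \gamma_v \circ \sigma_v|_{F_v} - \tau_v \circ f_v|_{F_v} \bigr),
\]
and apply the triangle inequality for the chosen operator norm to bound the left-hand side by the sum of the norms of the two parenthesized terms.

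For the first term I would factor $\sigma_v|_{F_v}$ out on the right—valid pointwise, since subtraction in the codomain $\wtSv$ commutes with precomposition—to obtain $\bigl( \wtfv - \tau_v \circ f_v \circ \gamma_v|_{\wtFv} \bigr) \circ \sigma_v|_{F_v}$; submultiplicativity of the operator norm then bounds this by $(\omega_1)_v \, \|\sigma_v|_{F_v}\|$. For the second term I would factor $\tau_v \circ f_v$ out on the left as $\tau_v \circ f_v \circ \bigl( \gamma_v \circ \sigma_v|_{F_v} - \id_{F_v} \bigr)$ and again invoke submultiplicativity to bound it by $\|\tau_v \circ f_v|_{F_v}\| \, (\omega_2)_v$. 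Adding the two bounds yields precisely the claimed inequality.

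The main obstacle I anticipate is careful bookkeeping around the operator norm rather than any deep difficulty. The first factoring holds for arbitrary functions, but the second—pulling $\tau_v \circ f_v$ to the left of the difference $\gamma_v \circ \sigma_v|_{F_v} - \id_{F_v}$—relies on the submultiplicativity property $\|A \circ B\| \le \|A\|\,\|B\|$ furnished by the operator norms selected just before the statement, and on consistently extending domains so that $\tau_v \circ f_v$ may be applied to the vector-valued quantity $\gamma_v \circ \sigma_v|_{F_v} - \id_{F_v}$. I would therefore make explicit that submultiplicativity together with the triangle inequality is the only analytic input, and track the restrictions $|_{F_v}$ and $|_{\wtFv}$ throughout so that each norm appearing matches the definitions of $(\omega_1)_v$ and $(\omega_2)_v$ exactly.
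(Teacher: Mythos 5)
Your proposal is correct and is essentially identical to the paper's proof: the paper inserts the same intermediate composite $\tau_v \circ f_v \circ \gamma_v \circ \sigma_v$ by adding and subtracting it, then applies the triangle inequality and submultiplicativity of the operator norm to obtain exactly the two bounds $(\omega_1)_v \|\sigma_v\|$ and $(\omega_2)_v \|\tau_v \circ f_v\|$. Your extra care in tracking the restrictions to $F_v$ and $\wtFv$ is a mild improvement in rigor over the paper, which explicitly abuses notation by suppressing them.
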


\begin{proof}
In the interest of saving space,
we abuse notation below
by leaving off the restrictions of the domains of each function to $F_v$.
We see that
\begin{align*}
\| \wtfv \circ \sigma_v - \tau_v \circ f_v \|
	&= \| \wtfv \circ \sigma_v - \tau_v \circ f_v \circ \gamma_v \circ \sigma_v + \tau_v \circ f_v \circ \gamma_v \circ \sigma_v - \tau_v \circ f_v \| \\
	&\leq \|\wtfv - \tau_v \circ f_v \circ \gamma_v\| \|\sigma_v\| + \|\tau_v \circ f_v\| \|\gamma_v \circ \sigma_v - \id_{F_v}\|. \qedhere
\end{align*}
\end{proof}


In general,
the definitions of the thresholding functions will be dictated by the underlying problem and the desired relaxation,
but a modeler has some flexibility in choosing $\gamma_v$ and $\wtfv$.
An immediate corollary to Proposition \ref{prop:thresholding-error-bound}
suggests optimal choices for these functions.


\begin{corollary} \label{cor:optimal-wtfv}
Let $v \in V$ be a piece of equipment and suppose that the thresholding functions $\tau_v$ and $\sigma_v$ are fixed.
If $\wtfv$ and $\gamma_v$ are chosen such that
$\wtfv = \tau_v \circ f_v \circ \sigma_v$ and $\gamma_v \circ \sigma_v|_{F_v} = \id_{F_v}$,
then $\|\wtfv \circ \sigma_v|_{F_v}- \tau_v \circ f_v|_{F_v} \| = 0$,
i.e., no error is incurred by the thresholding process.
\end{corollary}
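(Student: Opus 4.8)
The plan is to read this off directly from Proposition \ref{prop:thresholding-error-bound}. That proposition gives
$\| \wtfv \circ \sigma_v|_{F_v} - \tau_v \circ f_v|_{F_v} \| \le (\omega_1)_v \|\sigma_v|_{F_v}\| + (\omega_2)_v \|\tau_v \circ f_v|_{F_v}\|$,
so it suffices to show that the two error constants $(\omega_1)_v$ and $(\omega_2)_v$ both vanish under the stated hypotheses. Once both are zero, the right-hand side collapses and, since the left-hand side is a norm and hence nonnegative, it must itself equal zero.

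First I would dispatch $(\omega_2)_v$: by its definition $(\omega_2)_v = \|\gamma_v \circ \sigma_v|_{F_v} - \id_{F_v}\|$, which is exactly the norm of the zero map under the hypothesis $\gamma_v \circ \sigma_v|_{F_v} = \id_{F_v}$, so $(\omega_2)_v = 0$. Next, for $(\omega_1)_v = \|\wtfv - \tau_v \circ f_v \circ \gamma_v|_{\wtFv}\|$, I would observe that the relevant hypothesis on $\wtfv$ is that it agrees with $\tau_v \circ f_v \circ \gamma_v$ on $\wtFv$; note that this is the composition that is well-typed, since $\gamma_v \colon \wtRv \to R_v$, then $f_v \colon R_v \to S_v$, then $\tau_v \colon S_v \to \wtSv$, whereas $f_v \circ \sigma_v$ does not typecheck. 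Under this hypothesis the argument of the $(\omega_1)_v$ norm is identically zero, so $(\omega_1)_v = 0$.

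With both constants zero, Proposition \ref{prop:thresholding-error-bound} yields $\| \wtfv \circ \sigma_v|_{F_v} - \tau_v \circ f_v|_{F_v} \| \le 0$, and nonnegativity of the norm forces equality. There is essentially no obstacle here; the only subtlety is that each product such as $0 \cdot \|\sigma_v|_{F_v}\|$ must be a genuine zero rather than an indeterminate $0 \cdot \infty$, which is guaranteed by the standing assumption that all functions in play are measurable with well-defined (finite) operator norms. To sidestep even this point, I would alternatively give the direct telescoping computation $\wtfv \circ \sigma_v|_{F_v} = \tau_v \circ f_v \circ \gamma_v \circ \sigma_v|_{F_v} = \tau_v \circ f_v \circ (\gamma_v \circ \sigma_v|_{F_v}) = \tau_v \circ f_v \circ \id_{F_v} = \tau_v \circ f_v|_{F_v}$, so the difference is literally the zero map and its norm is $0$. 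The whole argument reduces to careful bookkeeping of the domain restrictions to $F_v$ and $\wtFv$.
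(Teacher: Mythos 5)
Your proof is correct and takes exactly the route the paper intends: the corollary is presented as immediate from Proposition \ref{prop:thresholding-error-bound}, with the hypotheses forcing $(\omega_1)_v = 0$ and $(\omega_2)_v = 0$ so the bound collapses to zero, and your direct telescoping computation $\wtfv \circ \sigma_v|_{F_v} = \tau_v \circ f_v \circ (\gamma_v \circ \sigma_v|_{F_v}) = \tau_v \circ f_v|_{F_v}$ is a sound (indeed cleaner) supplement. You are also right to flag that the stated hypothesis $\wtfv = \tau_v \circ f_v \circ \sigma_v$ is a typo for $\wtfv = \tau_v \circ f_v \circ \gamma_v$: only the latter composition typechecks ($\gamma_v \colon \wtRv \to R_v$, $f_v \colon R_v \to S_v$, $\tau_v \colon S_v \to \wtSv$), and the paper itself uses $\tau_v \circ f_v \circ \gamma_v$ when invoking this corollary in Section \ref{sec-appl-boolean-control}.
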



To summarize, Proposition \ref{prop:thresholding-error-bound} relates the error incurred by thresholding
to the error in approximating the dynamics ($(\omega_1)_v$)
and the error in recovering an original state from its discretized form
($(\omega_2)_v$).
Each of these quantities can be minimized by selecting appropriate $\wtfv$ and $\gamma_v$ functions,
and Corollary \ref{cor:optimal-wtfv} provides sufficient conditions for ensuring zero thresholding error.
Note that the choices for $\wtfv$ and $\gamma_v$ suggested by the corollary are optimistic.
Choosing $\wtfv = \tau_v \circ f_v \circ \sigma_v$ may not be possible,
as $f_v$ may not be fully known to the modeler,
or even preferable,
as a fully-known $f_v$ may be computationally prohibitive to evaluate.
Similarly,
$\sigma_v$ will typically be a many-to-one function on $R_v$,
so the definition of $F_v$ and the restriction of $\sigma_v$ to that subspace
thus determine whether or not $\gamma_v$ can be chosen per the corollary.


\subsection{Lifting the thresholding into the sheaf}
\label{sec-btm-relaxation_subsec-lifting-thresh}

By thresholding each stalk of the sheaf $\shf{N}$ using the process described in the previous section,
this sheaf is transformed into a new sheaf $\wtshfN$ whose stalks are spaces of Boolean vectors.
Using the construction given in \cite[Section 4]{robinson2017sheaf},
the thresholding process can be realized as three sheaf morphisms,
$\Sigma \colon \shf{N} \to \wtshfN$,
$\Gamma \colon \wtshfN \to \shf{N}$, and
$T \colon \shf{L} \to \wtshfL$,
that use the $\sigma_v$, $\gamma_v$, and $\tau_v$ functions defined in the previous section as the component maps for each stalk.
For each $1$-hop neighborhood $U_v$,
let $(\Sigma)_{U_v} := \sigma_v$, $(\Gamma)_{U_v} := \gamma_v$, and $(T)_{U_v} := \tau_v$,
and for a vertex $v$,
let $(\Sigma)_{v} := \tau_v$ and $(\Gamma)_{v} := \gamma_v |_{\wtSv}$.
The latter is well-defined because the domain of $\gamma_v$ is $\wtRv$,
a product for which one factor is $\wtSv$.
Since $\shf{L}$ only has stalks on the $1$-hop neighborhoods,
$T$ does not have components on $V$.

An analogous sheaf morphism for the Boolean dynamics,
$\widetilde{f} \colon \wtshfN \to \wtshfL$,
can be defined by using the Boolean dynamics functions $\wtfv$ described in Section \ref{sec-btm-relaxation_subsec-thresholding} stalk-wise:
$\widetilde{f_{U_v}} := \wtfv$.
Although this notation appears to be slightly inconsistent,
it is not ambiguous because the sheaf morphism $\widetilde{f}$ only has component maps on the $1$-hop neighborhoods $U_v$.

In the neighborhood of $U_v$, the diagram of these sheaf morphisms has the form
\begin{equation} \label{eqn:btm-sheaf}
\xymatrix @R = 1pc {
\shf{N}\ar@/_1pc/[r]_{\Sigma} \ar[dd]_{f}& \wtshfN\ar@/_1pc/[l]_{\Gamma} \ar[dd]_{\widetilde{f}} & S_w\ar@/^2pc/[rrr]^{\tau_w}&& S_v\ar@/^2pc/[rrr]^{\tau_v}                                                                                                  &       \widetilde{S_w} \ar@/_4pc/[lll]_{\gamma_w|_{\wtSv}} && \wtSv \ar@/_4pc/[lll]_{\gamma_v|_{\wtSv}} &\\
& && F_v \ar@/_1pc/[rrr]_{\sigma_v}\ar[d]^{f_{U_v}} \ar[ul]^{\pr_w} \ar[ur]^{\pr_v}  &&&\wtFv \ar@/_1pc/[lll]_{\gamma_v} \ar[d]^{\widetilde{f_{U_v}}} \ar[ur] \ar[ul] &\\
\shf{L}\ar[r]_{T}& \wtshfL  & & S_v \ar[rrr]_{\tau_v} &&& \wtSv  &\\
}
\end{equation}
This diagram of sheaf morphisms is approximately commutative,
such that the difference between $T \circ f$ and $\widetilde{f} \circ \Sigma$ is a bounded function from the global sections of
$\shf{N}$ to the global sections of $\wtshfN$ in the assignment pseudometric.

Since Section \ref{sec-sheaf-encoding} established that the solutions to the optimal control problem correspond to certain sheaf assignments with minimal consistency radius,
we need to determine the impact of the approximation error on the consistency radius of a typical assignment.
With this in hand, we can determine the impact of this approximation error on the solutions to the optimal control problem.

Recall that the sheaf $\shf{S}$ \eqref{eqn:dynamic_optimal} defined in Theorem \ref{thm:dynamic_optimal}
encodes solutions to the optimal control problem as assignments
that restrict to sections on portions of the diagram.
We may repeat the construction shown in \eqref{eqn:btm-sheaf}
in each time step to build a Boolean thresholded sheaf $\wtshfS$,
whose diagram matches \eqref{eqn:dynamic_optimal}
with $\shf{N}_\bullet$, $\shf{L}_\bullet$, $p$, $f$, and $J$ replaced with their Boolean counterparts;
in this case,
the components of the $\widetilde{p}$ and $\widetilde{J}$ morphisms are given by composing
the components of $p$ and $J$ with $\sigma_v$ or $\tau_v$ as appropriate.

If $r$ is an assignment of $\wtshfS$,
then it restricts to an assignment on each $\wtshfNn$ as well.
Applying the sheaf morphism $\Gamma$ on each such subsheaf,
we can translate $r$ into an assignment $\Gamma(r)$ on each $\shf{N}_n$.

It is perhaps useful to mention that the failure of the diagrams of thresholding morphisms to commute
will typically destroy global sections in the original sheaves when they are thresholded.
Therefore, the sheaf-based relaxations are essential to obtain bounds.
Proposition \ref{sec-sheaf-encoding_prop-morphism} states that the consistency radius of the image of an assignment through a sheaf morphism
is bounded by the consistency radius of the assignment in the domain.
The bound on the approximation error from Proposition \ref{prop:thresholding-error-bound} can be aggregated across all vertices,
and adds an extra term to this bound on consistency radius.


\begin{theorem} \label{sec-btm-relaxation_subsec-lifting-thresh-thm}
Let $r$ be an assignment to the sheaf $\wtshfS$
and let $s$ be an assignment of the sheaf $\shf{S}$
that restricts to a section on each $\shf{N}_n$.
Suppose that we take
\[
\epsilon_v := (\omega_1)_v \|\sigma_v\|  + (\omega_2)_v \|\tau_v \circ f_v \|,
\]
where $(\omega_1)_v := \|\wtfv - \tau_v \circ f_v \circ \gamma_v \|$ and $(\omega_2)_v := \|\gamma_v \circ \sigma_v - \id_{F_v}\|$ as in Proposition \ref{prop:thresholding-error-bound},
restricting domains to $F_v$ as appropriate.
Then
\[
c_{\wtshfNn}(r) \le K c_{\shf{N}_n}(\Gamma(r)) + C \epsilon \le 2 K d_{\shf{N}_n}(s,\Gamma(r)) + C \epsilon,
\]
where $\epsilon := \max_{v \in V}\{ \epsilon_v \}$,
$C^2$ is the number of restrictions in $\shf{N}_n$,
and $K$ is the largest Lipschitz constant of all of the $\gamma_v$, $f_v$, $J_v$, and $J'_v$ maps.
\end{theorem}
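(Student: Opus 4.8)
The plan is to read the displayed chain as two independent estimates that meet at the common quantity $c_{\shf{N}_n}(\Gamma(r))$, and to prove each half with one of the already-established tools: the first inequality is an instance of the morphism consistency-radius bound (Proposition \ref{sec-sheaf-encoding_prop-morphism}), with the additive error supplied by the thresholding-error bound (Proposition \ref{prop:thresholding-error-bound}), while the second is an instance of the section lower bound (Proposition \ref{sec-sheaf-encoding_prop-crbound}). I would set up the two halves separately and then concatenate them.

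I would dispatch the second inequality first, since it is the more routine. Because $s$ restricts to a global section on $\shf{N}_n$ and every restriction of $\shf{N}_n$ is a projection, hence $1$-Lipschitz, Proposition \ref{sec-sheaf-encoding_prop-crbound} applies with Lipschitz bound $1$ and yields $c_{\shf{N}_n}(\Gamma(r)) \le (1+1)\, d_{\shf{N}_n}(s,\Gamma(r)) = 2\, d_{\shf{N}_n}(s,\Gamma(r))$. Multiplying through by $K$ and adding $C\epsilon$ gives $K c_{\shf{N}_n}(\Gamma(r)) + C\epsilon \le 2K d_{\shf{N}_n}(s,\Gamma(r)) + C\epsilon$, which is the right-hand half of the chain. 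This step is essentially one line once the section hypothesis on $s$ is invoked.

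For the first inequality I would apply Proposition \ref{sec-sheaf-encoding_prop-morphism} to the Boolean thresholding morphism carrying $\shf{N}_n$ to $\wtshfNn$, taking $\Gamma(r)$ as the assignment in the domain. The image assignment is $r$ itself: each $\gamma_v$ is injective with left inverse $\sigma_v$ on the feasible set (and $\tau_v$ correspondingly inverts $\gamma_v|_{\wtSv}$ on the state coordinate), so pushing $\Gamma(r)$ back through the thresholding component maps returns $r$ stalkwise. With this identification, Proposition \ref{sec-sheaf-encoding_prop-morphism} outputs exactly $c_{\wtshfNn}(r) \le K c_{\shf{N}_n}(\Gamma(r)) + C\epsilon$, where $K$ is the uniform upper bound on the Lipschitz constants of the component maps appearing in the construction (the $\gamma_v$, $f_v$, $J_v$, and $J'_v$), and $C^2$ is the number of restrictions. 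The additive $C\epsilon$ is produced exactly as in the proof of Proposition \ref{sec-sheaf-encoding_prop-morphism}: a uniform per-restriction defect bound $\epsilon$ is summed over the $C^2$ restrictions and pulled out of the square root as $\sqrt{C^2\epsilon^2} = C\epsilon$.

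The crux, and the step I expect to fight with, is verifying the approximate-commutativity hypothesis of Proposition \ref{sec-sheaf-encoding_prop-morphism} with precisely the stated $\epsilon$. This requires bounding, uniformly over every restriction and every point of the relevant stalk, the discrepancy between the two legs of the thresholding square. On the dynamics restrictions this discrepancy is exactly $\|\wtfv \circ \sigma_v|_{F_v} - \tau_v \circ f_v|_{F_v}\|$, the failure of diagram \eqref{eqn:btm-stalk-diagram} to commute, which Proposition \ref{prop:thresholding-error-bound} bounds by $(\omega_1)_v\|\sigma_v\| + (\omega_2)_v\|\tau_v \circ f_v\| = \epsilon_v$; on the projection restrictions the square commutes exactly by the defining formula \eqref{eqn:sigma_v} for $\sigma_v$, contributing nothing. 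The remaining work is the aggregation: replacing each vertex-dependent $\epsilon_v$ by the single uniform bound $\epsilon := \max_{v \in V}\epsilon_v$ furnishes the one $\epsilon$ that Proposition \ref{sec-sheaf-encoding_prop-morphism} demands. I would finish by chaining the two established inequalities at $c_{\shf{N}_n}(\Gamma(r))$.
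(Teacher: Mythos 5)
Your proposal follows essentially the same route as the paper's proof: the right-hand inequality is obtained exactly as in the paper, by applying Proposition \ref{sec-sheaf-encoding_prop-crbound} with Lipschitz bound $1$ (all restrictions in $\shf{N}_n$ being projections) to get $c_{\shf{N}_n}(\Gamma(r)) \le 2\, d_{\shf{N}_n}(s,\Gamma(r))$; and the left-hand inequality is obtained, as in the paper, by feeding the uniform defect $\epsilon = \max_{v\in V}\epsilon_v$ supplied by Proposition \ref{prop:thresholding-error-bound} into Proposition \ref{sec-sheaf-encoding_prop-morphism} and chaining the two estimates at $c_{\shf{N}_n}(\Gamma(r))$. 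Your aggregation of the per-vertex bounds and the $\sqrt{C^2\epsilon^2}=C\epsilon$ bookkeeping match the paper's argument.

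The one place you go beyond what the stated hypotheses license is your justification for why Proposition \ref{sec-sheaf-encoding_prop-morphism} outputs a bound on $c_{\wtshfNn}(r)$ rather than on $c_{\wtshfNn}(\Sigma(\Gamma(r)))$: you assert that pushing $\Gamma(r)$ back through the thresholding maps returns $r$, i.e., that $\sigma_v \circ \gamma_v|_{\wtFv} = \id_{\wtFv}$. The theorem does not assume this; Section \ref{sec-btm-relaxation_subsec-thresholding} says only that one ``might be able to arrange'' it, and the identity actually established later (Proposition \ref{prop:boolean_left_inverse}, in the Boolean state control application) is the \emph{opposite} composition, $\gamma_v \circ \sigma_v|_{F_v} = \id_{F_v}$, which does not give what you need. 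Relatedly, under your reading the component maps fed into Proposition \ref{sec-sheaf-encoding_prop-morphism} are the $\sigma_v$ and $\tau_v$, so the multiplicative constant it produces bounds \emph{their} Lipschitz constants, whereas the theorem's $K$ is defined through the $\gamma_v$, $f_v$, $J_v$, and $J'_v$ maps; you quote the theorem's $K$ without checking that it dominates the maps you actually use. In fairness, the paper's own proof is equally terse at exactly this point --- it invokes Proposition \ref{sec-sheaf-encoding_prop-morphism} without naming the domain and image assignments, remarking only that the components of $\Gamma$ are included in $K$ --- so your reconstruction is a reasonable way to make that invocation precise; but the identification $\Sigma(\Gamma(r)) = r$ (or, failing that, a triangle-inequality term controlled by $d_{\wtshfNn}(r, \Sigma(\Gamma(r)))$) should be stated as an explicit additional hypothesis rather than asserted as a consequence of the setup.
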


If $r$ is the result of minimizing the consistency radius of an assignment of $\wtshfS$,
then $r$ can be interpreted as solving the Boolean thresholded problem rather than the original optimal control problem.
On the other hand, if the assignment $s$ is constructed according to the recipe in Proposition \ref{prop:static_optimal} and Theorem \ref{thm:dynamic_optimal},
then $s$ is an encoding of the solution to the original optimal control problem.
Theorem \ref{sec-btm-relaxation_subsec-lifting-thresh-thm} claims that the consistency radius of $r$ is a bound on the difference between
the solution to Boolean thresholded problem and the original optimal control problem.

\begin{proof}
First of all, since $r$ is an assignment of $\wtshfS$,
we can translate $r$ into an assignment $\Gamma(r)$ on each $\shf{N}_n$
as described before the statement of the Theorem.

On the other hand, we have assumed that $s$ is a section when it is restricted to each of the $\shf{N}_n$,
so we can compare $\Gamma(r)$ and $s$ via Proposition \ref{sec-sheaf-encoding_prop-crbound} to obtain
\begin{equation}
\label{eqn:thm2_eq1}
c_{\shf{N}_n}(\Gamma(r)) \leq 2 d_{\shf{N}_n}(s,\Gamma(r)),
\end{equation}
noting that all of the restrictions in $\shf{N}_n$ are projections and therefore have Lipschitz constant $1$.

By Proposition \ref{prop:thresholding-error-bound},
the maximum difference between the upper ($\tau_v \circ f_v$) and lower ($\wtfv \circ \sigma_v$) paths in the diagram \eqref{eqn:btm-stalk-diagram}
is bounded by $\epsilon_v$ for each vertex $v \in V$.
Therefore,
$\epsilon = \max_{v \in V} \{\epsilon_v\}$ bounds the failure of the thresholding maps to be a sheaf morphism across all stalks.
We can invoke Proposition \ref{sec-sheaf-encoding_prop-morphism} to conclude that
\begin{equation}
\label{eqn:thm2_eq2}
c_{\wtshfNn}(r) \le K c_{\shf{N}_n}(\Gamma(r)) + C \epsilon.
\end{equation}
Notice in particular that the components of $\Gamma$ are included in the bound $K$ on the Lipschitz constants.

Combining the two inequalities \eqref{eqn:thm2_eq1} and \eqref{eqn:thm2_eq2} yields
\[
c_{\wtshfNn}(r) \le K c_{\shf{N}_n}(\Gamma(r)) + C \epsilon \le 2 K d_{\shf{N}_n}(s,\Gamma(r)) + C \epsilon. \qedhere
\]
\end{proof}


This bound is rather pessimistic,
in that as the discretization error $d_{\shf{N}_n}(s,\Gamma(r))$ incurred by converting the problem to a Boolean thresholded one increases,
the consistency radius $c_{\wtshfNn}(r)$ computed on the thresholded sheaf tells less about the original problem.
On the other hand,
it is still the case that according to Theorem \ref{thm:dynamic_optimal},
we can assess how well the thresholded model is doing against the original problem.


\begin{corollary} \label{cor:gen_static_sheafrelaxed_optimal}
If the thresholding error is zero (i.e., $\epsilon = 0$),
then Theorem \ref{sec-btm-relaxation_subsec-lifting-thresh-thm} reduces to a time-dependent generalization of Proposition \ref{prop:static_sheafrelaxed_optimal}.
\end{corollary}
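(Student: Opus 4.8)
The plan is to observe that setting $\epsilon = 0$ collapses the two-part inequality of Theorem \ref{sec-btm-relaxation_subsec-lifting-thresh-thm} into precisely the structure of Proposition \ref{prop:static_sheafrelaxed_optimal}, carried out one time step at a time. First I would substitute $\epsilon = 0$ directly into the conclusion of Theorem \ref{sec-btm-relaxation_subsec-lifting-thresh-thm}. Since $C \epsilon = 0$, both additive error terms vanish, and the chain of inequalities becomes
\[
c_{\wtshfNn}(r) \le K \, c_{\shf{N}_n}(\Gamma(r)) \le 2 K \, d_{\shf{N}_n}(s, \Gamma(r))
\]
for each time step $n$.

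Next I would make the correspondence with Proposition \ref{prop:static_sheafrelaxed_optimal} explicit. In that proposition, the unconstrained consistency-radius minimizer $b$ restricted to an assignment $b'$ on $\shf{N}$, the optimal control $a$ restricted to a global section $a'$ on $\shf{N}$, and the conclusion read $c_{\shf{N}}(b') \le 2\, d_{\shf{N}}(a',b')$. In the present setting, $\Gamma(r)$ plays the role of the relaxed solution and $s$ plays the role of the optimal control; by hypothesis $s$ restricts to a section on each $\shf{N}_n$, exactly as $a$ restricted to the section $a'$. Consequently the second inequality above, $K\, c_{\shf{N}_n}(\Gamma(r)) \le 2K\, d_{\shf{N}_n}(s,\Gamma(r))$, is nothing more than a $K$-scaled, $n$-indexed copy of the conclusion of Proposition \ref{prop:static_sheafrelaxed_optimal}.

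Finally I would account for the two senses in which the statement \emph{generalizes} the proposition. The proposition concerned a single, time-invariant state sheaf $\shf{N}$, whereas here one obtains a separate inequality for each of the copies $\shf{N}_n$ embedded in $\shf{S}$, yielding the \textbf{time-dependent} version. Moreover, the proposition could take $K = 1$ because every restriction of $\shf{N}$ is a projection, while the Boolean construction requires passing through the lifting morphism $\Gamma$ and the dynamics, so that $K$ must absorb the Lipschitz constants of the $\gamma_v$, $f_v$, $J_v$, and $J'_v$ maps. The main obstacle is interpretive rather than computational: one must argue carefully that the hypothesis $\epsilon = 0$ isolates the thresholding error from the sheaf-relaxation gap, so that the surviving quantity $d_{\shf{N}_n}(s,\Gamma(r))$ measures exactly the same discrepancy between the relaxed and optimal solutions that Proposition \ref{prop:static_sheafrelaxed_optimal} bounded, merely reindexed over time and rescaled by $K$.
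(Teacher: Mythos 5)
Your proposal is correct and matches the paper's intent: the paper states this corollary without proof, treating it as immediate from substituting $\epsilon = 0$ into Theorem \ref{sec-btm-relaxation_subsec-lifting-thresh-thm}, which is exactly your first step. Your explicit correspondence ($\Gamma(r) \leftrightarrow b'$, $s \leftrightarrow a$, the per-$n$ indexing giving time-dependence, and $K$ absorbing the Lipschitz constants of $\gamma_v$, $f_v$, $J_v$, $J'_v$ in place of the proposition's $K=1$) is a faithful elaboration of what the paper leaves implicit, not a different route.
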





\section{Application: Boolean state control}
\label{sec-appl-boolean-control}

Per the discussion in Section \ref{sec-btm-relaxation_subsec-thresholding},
suppose that each piece of equipment (node) is endowed with both ``operational'' and ``failed'' states,
and that each node has a switching mechanism (control) that,
when activated,
may toggle the node's state between these two modes.
We do not assume that activating the control will necessarily induce a change in state,
as network topology and other physical considerations may preclude a state change from occurring.
As an example,
the on/off state of an incandescent light fixture in a home can typically be controlled by a corresponding light switch on the same circuit.
If the breaker for that circuit is tripped,
then the light fixture will become stuck in the off state,
regardless of the position of the light switch.
We note that Boolean state control has many useful applications.
Such models are of practical interest because of their ability to describe problems across domains as diverse as
biological networks \cite{claleealobuspoo18},
efficient marketing of products in social networks \cite{easkle10},
and transportation network fragility \cite{keafar18,com17}.
Also of interest in applications are network control systems that exhibit linear quadratic regulator (LQR) dynamics,
which we discuss in more detail in Section \ref{sec-appl-boolean-control_subsec-affine}.


\subsection{Problem setup}

Boolean state control problems are ones in which state changes are induced by
applying a control action at a subset of nodes in the network.
The immediate state changes caused by the controls may subsequently cascade through the network according to its specific dynamics.
We associate a Boolean control space $C_v := \{c_0, c_1\}$ to each node $v$ of the network,
where $c_0$ denotes that no control action will be applied to the node,
and $c_1$ denotes that a control action will be applied.
In the case of our lighting example,
the control action for the light fixture is the act of flipping the light switch on,
and the control space represents the choice of whether or not to carry out that action.

At each node $v$,
we partition the state space $S_v$ into two disjoint sets, $\Omega_v$ and $\Phi_v$,
representing the operational and failed states, respectively, for $v$.
In practice,
a description of the operational set $\Omega_v$ is typically given,
in which case the failed set $\Phi_v$ is defined as the set difference $\Phi_v := S_v - \Omega_v$.
Partitioning the state space allows us to unambiguously classify each possible node state as operational or failed,
which fits into our desired Boolean model.

In order to allow us to assess the feasibility of a given system state,
we further refine this model.
Assume that the state space $S_v$ for each piece of equipment $v$ contains a pair of \emph{nominal states},
denoted $(s_\Phi)_v$ and $(s_\Omega)_v$.
The value $(s_\Phi)_v \in \Phi_v$ represents the ideal state value that $v$ will take when it is failed,
and $(s_\Omega)_v \in \Omega_v$ represents the ideal state value that $v$ will take when it is operational.
In the context of the lighting example,
if the node state $s_v \in S_v$ is defined as the voltage at the light fixture,
then the nominal failed state $(s_\Phi)_v$ could correspond to electrical ground ($0$V),
while the nominal operational state $(s_\Omega)_v$ could correspond to the light fixture's normal operating voltage, say, $120$V.


\begin{definition}\label{def:nominal_configuration}
Suppose that nominal states $\{(s_\Phi)_v, (s_\Omega)_v\} \subseteq S_v$ are defined at each node $v$ and let $s \in \prod_{v \in V} S_v$.
If $\pr_{S_v} s \in \{(s_\Phi)_v, (s_\Omega)_v\}$ for all $v$,
then $s$ is a \emph{nominal configuration}.
Let $\mathbb{S}$ denote the set of all nominal configurations.
\end{definition}


We say that the network is in a feasible state if all of its node states collectively form a nominal configuration.
As a consequence,
we have $F_v = C_v \times \pr_{U_v} \mathbb{S}$.

The theoretical framework also requires us to identify $\wtFv$, the space of Boolean feasible system states at node $v$.
In words,
this will be defined as the set of all Boolean states that correspond to the discretization of a feasible system state;
mathematically, $\wtFv = \sigma_v(F_v)$,
where $\sigma_v$ is the thresholding function \eqref{eqn:sigma_v} for the Boolean state control problem,
explicitly defined in the next section.


\subsection{Boolean discretization}
\label{sec-appl-boolean-control_subsec-boolean-discretization}

For the Boolean discretization,
we first define the thresholding functions $\chi_v$, $\tau_v$, and $\sigma_v$ per Definition \ref{def:thresholding-functions}.
Let $\wtCv := \{0,1\}$ and define
$\chi_v \colon C_v \to \wtCv$ element-wise by $\chi_v(c_0) := 0$ and $\chi_v(c_1) := 1$.
Similarly,
let $\wtSv := \{0,1\}$ and define
$\tau_v \colon S_v \to \wtSv$ to take the value $0$ on $\Phi_v$ and the value $1$ on $\Omega_v$.
Recall that the final thresholding function, $\sigma_v \colon R_v \to \wtRv$,
is explicitly constructed in terms of $\chi_v$ and $\tau_v$ in \eqref{eqn:sigma_v}.
Note that the definition $\wtFv = \sigma_v(F_v)$ implies that feasibility preservation is satisfied for these functions.

Next,
we turn our attention to the lifting function $\gamma_v$ given in Definition \ref{def:lifting-function}.
The control thresholding function $\chi_v$ is invertible by construction,
and we use $\chi_v^{-1} \colon \wtCv \to C_v$ to represent the inverse function.
In contrast,
the state thresholding function $\tau_v$ is generally not invertible.
In lieu of a true inverse,
we introduce a function $\rho_v \colon \wtSv \to S_v$ that maps a Boolean state value to the corresponding nominal state value;
explicitly,
$\rho_v(0) := (s_\Phi)_v$ and $\rho_v(1) := (s_\Omega)_v$.
With these in hand,
the lifting function $\gamma_v \colon \wtRv \to R_v$ can be defined as
\begin{equation} \label{eqn:appl-gamma_v}
\gamma_v(\wtcv, \wtsv, \widetilde{s_{w_1}}, \dotsc) := (\chi_v^{-1}(\wtcv), \rho_v(\wtsv),\rho_{w_1}(\widetilde{s_{w_1}}), \dotsc).
\end{equation}
Feasibility preservation for $\gamma_v$ is an immediate corollary of the next result.


\begin{proposition} \label{prop:boolean_left_inverse}
Let $v \in V$.
The thresholding function $\sigma_v$ and lifting function $\gamma_v$
defined (in this section) for the Boolean state control problem satisfy
$\gamma_v \circ \sigma_v|_{F_v} = \id_{F_v}$.
\end{proposition}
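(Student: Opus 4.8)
The plan is to verify the claimed identity coordinatewise on an arbitrary feasible point. First I would fix a generic $x = (c_v, s_v, s_{w_1}, \dotsc) \in F_v$ and extract from the definition $F_v = C_v \times \pr_{U_v} \mathbb{S}$ the two structural consequences of feasibility that the whole argument rests on: the control coordinate satisfies $c_v \in C_v = \{c_0, c_1\}$, and every state coordinate $s_w$ (for $w \in U_v$, which includes $w = v$ via the self-edge) is a \emph{nominal} state, i.e., $s_w \in \{(s_\Phi)_w, (s_\Omega)_w\}$. These are the only facts about $x$ the proof will use.

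Next I would chain the explicit formulas \eqref{eqn:sigma_v} and \eqref{eqn:appl-gamma_v}. Applying $\sigma_v$ produces the Boolean tuple $(\chi_v(c_v), \tau_v(s_v), \tau_{w_1}(s_{w_1}), \dotsc)$, and then applying $\gamma_v$ yields $(\chi_v^{-1}(\chi_v(c_v)), \rho_v(\tau_v(s_v)), \rho_{w_1}(\tau_{w_1}(s_{w_1})), \dotsc)$. It therefore suffices to show that each factor recovers its original value. The control factor is immediate, since $\chi_v$ is a bijection by construction, so $\chi_v^{-1} \circ \chi_v = \id_{C_v}$.

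The substantive (if brief) step is the state factors, where I would show $\rho_w \circ \tau_w$ fixes every nominal state by a two-case check keyed to feasibility. If $s_w = (s_\Phi)_w$, then since $(s_\Phi)_w \in \Phi_w$ we get $\tau_w(s_w) = 0$, and $\rho_w(0) = (s_\Phi)_w = s_w$; symmetrically, if $s_w = (s_\Omega)_w \in \Omega_w$, then $\tau_w(s_w) = 1$ and $\rho_w(1) = (s_\Omega)_w = s_w$. I expect the conceptual crux to be recognizing that feasibility is precisely the hypothesis confining each state coordinate to the two nominal values on which $\rho_w$ inverts $\tau_w$: off these values $\tau_w$ collapses all of $\Phi_w$ (resp.\ $\Omega_w$) to a single Boolean, so $\rho_w \circ \tau_w$ is decidedly \emph{not} the identity on all of $S_w$ --- which is exactly why the restriction to $F_v$ is needed. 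Assembling the coordinatewise identities across the control factor and all $w \in U_v$ gives $\gamma_v(\sigma_v(x)) = x$, and since $x \in F_v$ was arbitrary, $\gamma_v \circ \sigma_v|_{F_v} = \id_{F_v}$.
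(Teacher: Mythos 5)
Your proof is correct and takes essentially the same route as the paper's: both extract from feasibility that every state coordinate lies in $\{(s_\Phi)_w, (s_\Omega)_w\}$, then verify the identity coordinatewise via $\chi_v^{-1} \circ \chi_v = \id_{C_v}$ and the fact that $\rho_w \circ \tau_w$ fixes each nominal state. Your explicit two-case check and the remark that $\rho_w \circ \tau_w$ fails to be the identity off the nominal states merely unpack the paper's condensed observation that $(\rho_w \circ \tau_w)((s_b)_w) = \rho_w(b) = (s_b)_w$ for $b \in \{0,1\}$.
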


\begin{proof}
Let $x := (c_v, s_v, s_{w_1}, \dotsc) \in F_v$,
so $\pr_{S_w} (x) \in \{ (s_\Phi)_w, (s_\Omega)_w \}$ for each $w \in U_v$.
Note also that
$\tau_v((s_b)_v) = b$ for $b \in \{0,1\}$,
per the definition of nominal state.
The composition $\gamma_v \circ \sigma_v$ satisfies
\[
(\gamma_v \circ \sigma_v)|_{F_v}(x) = ((\chi^{-1}_v \circ \chi_v)(c_v), (\rho_v \circ \tau_v)(s_v), (\rho_{w_1} \circ \tau_{w_1})(s_{w_1}), \dotsc ),
\]
and since $\chi^{-1}_v \circ \chi_v = \id_{C_v}$ and
$(\rho_w \circ \tau_w)((s_b)_w) = \rho_w(b) = (s_b)_w$ for $b \in \{0,1\}$,
the result is immediate.
\end{proof}


While feasibility preservation for $\gamma_v$ is a consequence of Proposition \ref{prop:boolean_left_inverse},
more interesting is how this result relates to the analysis of thresholding error for the Boolean state control problem.
Referring to Proposition \ref{prop:thresholding-error-bound},
we conclude that the discretization error $(\omega_2)_v$ is zero for all nodes $v$ in the network,
although it is worth reiterating that this is made possible by a choice of lifting function
that relies both on prior knowledge of the nominal network states $\mathbb{S}$
and on appropriate control restrictions to constrain $F_v$.
In any case,
with this choice of $\gamma_v$,
Corollary \ref{cor:optimal-wtfv} states that the thresholding error will be completely eliminated if
$\wtfv$ is chosen to be $\tau_v \circ f_v \circ \gamma_v$.
While this is generally difficult to arrange,
we conclude by demonstrating that it can be done for a common class of well-behaved systems.


\subsection{Systems with affine dynamics}
\label{sec-appl-boolean-control_subsec-affine}

We now focus on real spaces,
so that $C_v := \{c_0, c_1\} \subset \mathbb{R}$
and $S_v := \mathbb{R}$ for each piece of equipment $v$.
It is conventional in control theory to use vectors
$\vecx \in \prod_{v \in V} S_v$ and
$\vecu \in \prod_{v \in V} C_v$ to represent the state and control variables,
respectively,
in a system.
Similarly,
vectors $\wtvecx$ and $\wtvecu$ can be used to represent Boolean state and control variables.
For simplicity, we will assume that all states and controls are feasible.
This condition can be relaxed, but it greatly complicates the discussion.
When $\vecx \in \mathbb{S}$,
so that its component states are nominal (and hence feasible),
Proposition \ref{prop:net_state_sheaf} shows that we can consider $(\vecx, \vecu)$ to be a global section on $\shf{N}$.
More generally,
observe that any pair $(\vecx, \vecu)$ with feasible $\vecx$ can be interpreted
as a section of any of the sheaves that we have considered thus far.%
\footnote{Actually, for sheaves whose stalks do not use the control states, specification of just $\vecx$ is sufficient.}

Armed with this new notation,
we turn our attention to systems whose dynamics are defined by an affine function of the state and control variables.
Such equations arise in the study of linear quadratic regulator (LQR) control systems.
LQR dynamics are used in a variety of applications \cite{keafar18,com17},
and are well-studied due to their considerable practical importance.
While we use a network-based interpretation of these dynamics,
LQR models are useful in many other contexts.
These include various canonical examples in optimal control theory,
such as Kalman filtering and vehicle motion control,
that are well documented,
e.g., \cite{clarke2013FunctionalAnalysis}.

Define the system dynamics function as
\begin{equation} \label{eqn:LQR}
f(\vecx, \vecu) := A \vecx + B \vecu + \vech,
\end{equation}
where $A$ is a real matrix for which $A_{wv} = 0$ if $(w,v) \notin E$,
$B$ is a real diagonal matrix,
and $\vech$ is a real vector.
Recall from Section \ref{sec-sheaf-encoding_subsec-evolving-dynamics}
that $f$ has already been defined as a sheaf morphism from $\shf{N}$ to $\shf{L}$.
While admittedly a minor abuse of notation,
the choice of $f$ to denote the dynamics function in \eqref{eqn:LQR} is no accident.
Restricting our attention to the component of $f(\vecx, \vecu)$ corresponding to node $v$,
we see that
\begin{equation} \label{eqn:LQR_v}
[f(\vecx, \vecu)]_v
	= \sum_{w \in V} A_{wv} \vecx_w + \sum_{w \in V} B_{wv} \vecu_w + \vech_v
	= \sum_{w \in U_v} A_{wv} s_w + B_{vv} c_v + \vech_v,
\end{equation}
since $A_{wv} = 0$ for $w \notin U_v$ and $B$ is diagonal.
Thus $[f(\vecx, \vecu)]_v$ can be seen as a function that takes as input an element
$(c_v, s_v, s_{w_1}, \dotsc) \in R_v$
and produces an element of $\mathbb{R} = S_v$,
which is compatible with the definition of the dynamics function $f_v$ required by our framework.
Recall that $f_v$ is the component map for the sheaf morphism $f$ on the 1-hop neighborhood $U_v$,
and that the morphism $f$ describes how to transform an assignment for $\shf{N}$ into one for $\shf{L}$.
This suggests that if we interpret $(\vecx, \vecu)$ as a section $r$ of $\shf{N}$,
then we can also interpret $f(\vecx, \vecu)$ as a representation of the section $f(r)$ of $\shf{L}$.
This vector-based representation of sheaf morphisms offers a convenient way to simultaneously compute
the actions of all of the component maps on a given section.

As noted,
Corollary \ref{cor:optimal-wtfv} provides a recipe for eliminating thresholding error.
Proposition \ref{prop:boolean_left_inverse} demonstrated that the first requirement,
choosing $\gamma_v$ such that $\gamma_v \circ \sigma_v|_{F_v} = \id_{F_v}$,
is satisfied for the Boolean state control problem.
Our goal in this section is to demonstrate that the second requirement,
namely choosing $\wtfv = \tau_v \circ f_v \circ \gamma_v$,
is possible and reasonable when $f_v = [f(\vecx, \vecu)]_v$ \eqref{eqn:LQR_v}.

Assume that $v$ is operational if its state value is greater than some \emph{failure threshold value} $\eta_v$,
and that $v$ is failed otherwise;
mathematically,
$\Omega_v := [\eta_v, \infty)$ and $\Phi_v := (-\infty, \eta_v)$.
Note that the concept of the threshold value is distinct from that of the nominal states,
which represent best-case state behavior.
It is allowable to choose $\eta_v := (s_\Omega)_v$,
but this is not required,
and is typically not the most sensible choice.
Returning to the lighting example,
where the node state values represent voltages,
recall that we identified
$(s_\Phi)_v = 0\text{V}$ and $(s_\Omega)_v = 120\text{V}$
as the incandescent light fixture's nominal states.
We may determine by experimentation that the fixture produces enough light to be considered operational
when its voltage is above $\eta_v = 40\text{V}$,
but below that voltage,
the fixture produces too little light to be useful.

Referring to the constructions in Section \ref{sec-appl-boolean-control_subsec-boolean-discretization},
we explicitly define the thresholding functions as
$\chi_v(c_v) := (c_v - c_0)/(c_1-c_0)$
and
$\tau_v(s_v) := H(s_v - \eta_v)$,
where $H$ is the Heaviside function.
For the components of the lifting function $\gamma_v$ \eqref{eqn:appl-gamma_v},
we have $\chi_v^{-1}(\wtcv) = (c_1 - c_0) \wtcv + c_0$
and let $\rho_v(\wtsv) = ((s_\Omega)_v - (s_\Phi)_v) \wtsv + (s_\Phi)_v$.
Therefore,
the Boolean dynamics function $\wtfv$ that eliminates approximation error for the Boolean state control problem,
$\wtfv = \tau_v \circ f_v \circ \gamma_v$,
is explicitly given by
\begin{align} \label{eqn:appl_optimal-wtfv}
\wtfv(\wtcv, \wtsv, \widetilde{s_{w_1}}, \dotsc)
	&= H \left( \sum_{w \in U_v} A_{wv} \rho_w(\wtsw) + B_{vv} \chi^{-1}_v(\wtcv) + \vech_v - \eta_v \right) \\ \nonumber
	&= H \left( \sum_{w \in U_v} A_{wv} \left[ ((s_\Omega)_w - (s_\Phi)_w) \wtsw + (s_\Phi)_w \right] \right. \\ \nonumber
	& \left. \phantom{= H \left( \sum_{w \in U_v} \right.} + B_{vv} \left[ (c_1 - c_0) \wtcv + c_0 \right] + \vech_v - \eta_v \right).
\end{align}

Just as the action of the sheaf morphism $f$ can be computed by applying the system dynamics function $f(\vecx, \vecu)$,
we can define a discretized system dynamics function $\widetilde{f}(\wtvecx, \wtvecu)$
that represents the action of the sheaf morphism $\widetilde{f}$.
First,
define
\[
\chi^{-1}(\wtvecu) := D_c \wtvecu + c_0 \vecone \qquad \text{and} \qquad \rho(\wtvecx) := D_s \wtvecx + \vechs,
\]
where $D_c = \diag{(c_1 - c_0)}$,
$\vecone$ is an all-ones vector,
$D_s$ is a diagonal matrix with $(D_s)_{vv} = ((s_\Omega)_v - (s_\Phi)_v)$,
and $\vechs$ is a vector with $(\vechs)_v = (s_\Phi)_v$.
Observe that $\left[ \chi^{-1}(\wtvecu) \right]_v = \chi^{-1}_v(\wtcv)$ and $\left[ \rho(\wtvecx) \right]_v = \rho_v(\wtsv)$.
With these functions,
we can write
\[
\Gamma(\wtvecx, \wtvecu) = (\rho(\wtvecx), \chi^{-1}(\wtvecu)),
\]
which captures the aggregate action of the sheaf morphism with the same symbol.%
\footnote{Note, however, that $\left[ \Gamma(\wtvecx, \wtvecu) \right]_v = (\rho_v(\wtsv), \chi^{-1}_v(\wtcv)) \neq \gamma_v(\wtcv, \wtsv, \widetilde{s_{w_1}}, \dotsc)$.}
The sheaf morphism $T$ can similarly be represented as $T(\vecx) = \mathbf{H}(\vecx - \veceta)$,
where $\mathbf{H}$ applies the Heaviside function component-wise
and $\veceta$ is a vector with $\veceta_v = \eta_v$.
We can then define
\begin{equation} \label{eqn:appl_wtf-morphism}
\widetilde{f}(\wtvecx, \wtvecu) := (T \circ f \circ \Gamma)(\wtvecx, \wtvecu) = \mathbf{H} \left( M_1 \wtvecx + M_2 \wtvecu + \vecy \right),
\end{equation}
where
$M_1 := A D_s$,
$M_2 := B D_c$,
and $\vecy := A \vech_s + c_0 B \mathbbm{1} + \vech - \veceta$.
A straightforward computation shows that
$\left[ \widetilde{f}(\wtvecx, \wtvecu) \right]_v = \wtfv(\wtcv, \wtsv, \widetilde{s_{w_1}}, \dotsc)$ \eqref{eqn:appl_optimal-wtfv}.

We emphasize that Proposition \ref{prop:boolean_left_inverse}
and the choice of $\wtfv = \tau_v \circ f_v \circ \gamma_v$
together ensure that no error is incurred by the thresholding process for the
Boolean state control problem with an affine dynamics function \eqref{eqn:LQR}.
This implies that $\epsilon = 0$ in Theorem \ref{sec-btm-relaxation_subsec-lifting-thresh-thm},
and hence Corollary \ref{cor:gen_static_sheafrelaxed_optimal} applies in this situation.
Notice that both the individual discrete dynamics function $\wtfv$ \eqref{eqn:appl_optimal-wtfv}
and the discretized system-level function $\widetilde{f}(\wtvecx, \wtvecu)$ \eqref{eqn:appl_wtf-morphism}
consist of a Heaviside function applied to an affine function of the input arguments.
This is desirable because such functions appear as relaxations in a variety of optimization contexts
\cite{farkea17,easkle10,boyvan04},
and modern integer programming toolboxes provide a means to effectively solve problems involving such functions.


\section{Conclusions}
\label{sec-conclusion}

By providing an explicit bound on the amount of error in solving a discretized optimization problem as an approximation to an optimization problem on a network,
our results provide a means to understand and evaluate critical aspects of discrete relaxations in a systematic way.
The bound therefore supports the use of efficient discretized optimization solvers
for continuous network optimization problems.

Our sheaf-theoretic result is general, yet it is easy to apply.
Given that every network optimization problem can be encoded in our sheaf-theoretic framework,
sheaves are always present in the treatment of network design and control.
While the bound that we have constructed can be loose in practice,
we have demonstrated that it can be tightened by incorporating problem-specific information.
Our framework thus has broad theoretical applicability,
and is a useful practical guide for the construction of approximate discretizations for efficiently finding solutions.

An interesting open problem that naturally stems from the constructions in this work
is the classification of a \emph{minimal} feasible set $F_v$ some vertex $v$.
Every element of such an $F_v$ would correspond to some physically-realizable configuration of the system,
although restrictions on control actions may prevent a user from actually realizing each of these states.
Note that since $F_v$ must naturally take the system dynamics into account (perhaps indirectly),
a full description of a minimal $F_v$ will almost certainly be problem-specific.


\bibliographystyle{plain}
\bibliography{control-master}

\end{document}